\documentclass[11pt]{article}

\oddsidemargin 8mm \textwidth 148mm \textheight 215mm \topskip 0pt
\topmargin -2mm

\usepackage{amsfonts, amssymb}
\usepackage{amsmath}
\usepackage{verbatim}
\usepackage{amsthm}
\usepackage{mathrsfs}   
\usepackage{enumerate}

\usepackage{indentfirst}
\usepackage{color}
\renewcommand{\l}{\hat{l}}
\renewcommand{\L}{\hat{L}}
\newcommand{\E}{\hat{E}}
\newcommand{\e}{\hat{e}}
\newcommand{\p}{\prime}

\newcommand{\n}{\mathbf{n}}
\newcommand{\la}{\lambda}

\begin{document}

\newtheorem{lemma}{Lemma}[section]
\newtheorem{proposition}[lemma]{Proposition}
\newtheorem{theorem}[lemma]{Theorem}
\newtheorem{corollary}[lemma]{Corollary}
\newtheorem{definition}[lemma]{Definition}
\newtheorem{example}[lemma]{Example}
\newtheorem{remark}[lemma]{Remark}
\numberwithin{equation}{section}


\title{Modules and Structures of Planar Upper\\ Triangular Rook Monoids}
\date{}
\author {Jianqiang Feng  ~~Wenli Liu  ~~Ximei Bai ~~Zhenheng Li}

\vspace{ -8mm}
\maketitle

\begin{abstract}In this paper, we discuss modules and structures of the planar upper triangular  rook monoid $B_n$. We first show that the  order of $B_n$ is a Catalan number, then we investigate the properties of a module $V$ over $B_n$ generated by a set of elements $v_S$ indexed by the power set of $n$. We find that every nonzero submodule of $V$ is cyclic and completely decomposable; we give a necessary and sufficient condition for a submodule of $V$ to be indecomposable. We show that every irreducible submodule of $V$ is $1$-dimensional. Furthermore, we give a formula for calculating the dimension of every submodule of $V$. In particular, we provide a recursive formula for calculating the dimension of the cyclic module generated by $v_S$, and show that some dimensions are Catalan numbers, giving rise to new combinatorial identities.

\vspace{ 0.2cm} \noindent {\bf Keywords:} Rook monoid, order preserving, order decreasing, module, Catalan number, generators and relations.

\vspace{ 0.2cm} \noindent {\bf 2010 AMS Subject Classification:} 20M32, 05E10

\end{abstract}

\baselineskip 14pt
\parskip 1mm
\section{Introduction}

A matrix is a rook matrix if each entry is $0$ or $1$ and each row and column have at most one $1$. A rook matrix $A$ is {\it planar} or {\it order preserving} if the matrix obtained from $A$ by deleting all the zero rows and all the zero columns is an identity matrix.
The structure and representation theory of the rook monoid, consisting of all rook matrices, are intensively studied \cite{M2, S}. Herbig gives a structure and representation theory of a planar rook monoid \cite{H}. The planar upper triangular  rook monoid $B_n$ consists of planar upper triangular  rook matrices of size $n$.

It is natural to ask: What are the representation and structure properties of the planar upper triangular  rook monoid? More specifically, how do we construct interesting modules over $B_n$, and what do irreducible $B_n$-modules look like? What is the order of $B_n$ and what are the dimensions of the modules of interest? How are the order and the dimensions related to combinatorics? What are the generators and defining relations of $B_n$? These questions are closely related to the theory of linear algebraic monoids, since it was made clear in \cite{LLC14, R1} that we are here dealing with the most familiar interesting case of planar upper triangular Renner monoids of reductive monoids. For more information on Renner monoids, see \cite{LR03, LLC14, P1, R2, S}.

In this paper we answer the questions above, and our discussion goes a little deeper, showing that the $B_n$-module properties of $V$ are dramatically different from those of $V$ as a module over the planar rook monoid.
In Section 2 after gathering basic definitions and concepts related to planar upper triangular  rook monoids $B_n$, we give a new interpretation of $B_n$ using generalized reduced echelon matrices. We then calculate in Section 3 the order of $B_n$ in two different ways and show that it is a Catalan number.

Section 4 is devoted to the investigation of $B_n$-modules over a field $F$ of characteristic $0$. Let $V_k$ be a vector space over $F$ generated by a set of elements $v_S$ indexed by the $k$-subsets of $\n=\{1, ..., n\}$. Then $V_k$ is a $B_n$-module under the action (\ref{moddef}). We are particularly interested in $B_n$-submodules of $V_k$ and of $V=\bigoplus_{k=0}^n V_k$. We find that
every nonzero submodule of $V$ is completely decomposable, and that a submodule of $V$ is indecomposable if any only if it lies in some $V_k$. Furthermore, we show that every submodule of $V$ is cyclic, and that each irreducible submodule of $V$ is $1$-dimensional and is contained in all nonzero submodules of some $V_k$. We also show that any two different submodules of $V$ are not isomorphic. Moreover, we give a formula for calculating the dimension of every submodule of $V$ using the inclusion-exclusion principle. In particular, we provide a recursive formula for calculating the dimensions of the modules generated by a single basis vector, and find that some of these dimensions are Catalan numbers again, connecting to combinatorics. Viewed as $B_t$-modules with $t<n$, we are able to decompose some $B_n$-submodules of $V_k$ into indecomposable $B_t$-submodules.
Section 5 describes the generators and defining relations of $B_n$.

{\bf Acknowledgement} {We would like to thank Dr. M. Can for useful email communications and Dr. R. Koo for valuable comments.}

\section{Preliminaries}

\begin{definition}
An {\em injective partial map} $f$ of
$\mathbf{n}$ is a one-to-one map of a subset $D(f)$
of $\mathbf{n}$ onto a subset $R(f)$ of $\mathbf{n}$ where $D(f)$ is
the domain of $f$ and $R(f)$ is the range of $f$.
\end{definition}
We agree that there is a map with empty domain and range and call it 0 map. We can write an injective partial map $f$ of $\mathbf{n}$ in 2-line notation by writing the numbers $s_1,\dots, s_k$ in the top line if $D(f)=\{s_1,\dots, s_k\}$, and then below each number we write its image. Equivalently, we can represent such a map by an $n\times n$ rook matrix, where the entry in the
$i$th row and the $j$th column is 1 if the map takes $j$ to $i$, and is 0 otherwise.
For example, the map $\sigma$ given below is an injective partial map of $\mathbf{5}$,
{
\begin{eqnarray*}
\sigma&=& \left(
            \begin{array}{cccc}
              1 & 2 & 3 & 5 \\
              1 & 2 & 4 & 5 \\
            \end{array}
          \right)
 \\
   &=& \left(
         \begin{array}{ccccc}
           1 & 0 & 0 & 0 & 0 \\
           0 & 1 & 0 & 0 & 0 \\
           0 & 0 & 0 & 0 & 0 \\
           0 & 0 & 1 & 0 & 0 \\
           0 & 0 & 0 & 0 & 1 \\
         \end{array}
       \right)~.
\end{eqnarray*}
}
\begin{definition} The {\em rook monoid} $R_n$ is the monoid of
injective partial maps from $\mathbf{n}$ to $\mathbf{n}$, whose operation is the composition of partial maps and the identity element is the identity map of $\n$.
\end{definition}
Since elements of $R_n$ are not necessarily invertible, $R_n$ is not a group. The map with empty domain and empty range behaves as
a zero element. In matrix form, the composition of $R_n$ is consistent with the usual matrix multiplication. Here is an example: for $g=\left(
          \begin{array}{ccc}
            2 & 3 & 4 \\
            1 & 5 & 2 \\
          \end{array}
        \right), ~
f=\left(
       \begin{array}{cccc}
         1 & 3 & 4 & 5 \\
         1 & 2 & 3 & 4 \\
       \end{array}
     \right)\in R_5,
$ we have
$$
gf=
\left(
          \begin{array}{ccc}
            2 & 3 & 4 \\
            1 & 5 & 2 \\
          \end{array}
        \right)\circ
\left(
       \begin{array}{cccc}
         1 & 3 & 4 & 5 \\
         1 & 2 & 3 & 4 \\
       \end{array}
     \right)
=\left(
   \begin{array}{ccc}
     3 &4& 5 \\
     1 &5& 2 \\
   \end{array}
 \right)~.
$$
The corresponding matrix form of the operation reads as
$$
gf= \left(
  \begin{array}{ccccc}
    0 & 1 & 0 & 0 & 0 \\
    0 & 0 & 0 & 1 & 0 \\
    0 & 0 & 0 & 0 & 0 \\
    0 & 0 & 0 & 0 & 0 \\
    0 & 0 & 1 & 0 & 0 \\
  \end{array}
\right) \left(
  \begin{array}{ccccc}
    1 & 0 & 0 & 0 & 0 \\
    0 & 0 & 1 & 0 & 0 \\
    0 & 0 & 0 & 1 & 0 \\
    0 & 0 & 0 & 0 & 1 \\
    0 & 0 & 0 & 0 & 0 \\
  \end{array}
\right)= \left(
  \begin{array}{ccccc}
    0 & 0 & 1 & 0 & 0 \\
    0 & 0 & 0 & 0 & 1 \\
    0 & 0 & 0 & 0 & 0 \\
    0 & 0 & 0 & 0 & 0 \\
    0 & 0 & 0 & 1 & 0 \\
  \end{array}
\right)~. $$

An injective partial map from $\mathbf{n}$ to $\mathbf{n}$ is {\em order preserving} if whenever $a<b$ in the domain of the map, then $f(a)<f(b)$. An injective partial map $f$ is order preserving if and only if the matrix obtained from the matrix form of $f$ by deleting all the zero rows and all the zero columns is an identity matrix; equivalently the graph obtained from the 2-line notation of $f$ by joining all defined $f(a)$ in the range of the map to $a$ is a planar graph, which justifies the name in the following definition.
\begin{definition}
The {\em planar rook monoid}, denoted by $PR_n$, is the monoid of {\em order preserving} injective partial maps from $\mathbf{n}$
to $\mathbf{n}$.
\end{definition}
\noindent Obviously, $PR_n$ is a submonoid of $R_n$. The structure and representation of the planar rook monoid is studied in Herbig \cite{H}. In particular, $V_k$ is an irreducible $PR_n$-module.

The next definition will give a different interpretation of an order preserving injective partial map.
\begin{definition}
A rectangular matrix is a {\em generalized (row and column) reduced echelon matrix if}
\vspace{-3mm}
\begin{enumerate}[{\rm(1)}]
  \item Each leading entry of a row is $1$ and is in a column to the right of the leading entry of the row above it.
\vspace{-3mm}
  \item Each leading entry of a column is $1$ and is in a row below the leading entry of the column to the left of it.
\vspace{-3mm}
  \item Each leading 1 is the only nonzero entry in its column and its row.
\end{enumerate}
\end{definition}
\noindent
This definition does not require that all nonzero rows are above any zero rows nor all nonzero columns are to the left of any zero columns.  Since the row and column reduced echelon form of a matrix is equivalent to the normal form of the matrix, we can consider a generalized reduced echelon matrix to be a generalization of the normal form of a matrix.

An injective partial map is order preserving if and only if its matrix form is a generalized reduced echelon matrix. Thus, the set of all the generalized reduced echelon matrices of size $n$ is a monoid with respect to the multiplication of matrices, and the order of this monoid is $\binom{2n}{n}$, since the order of $PR_n$ is
\[
    |PR_n| = \binom{2n}{n}~.
\]

An injective partial map is called {\em order decreasing} if for all $a$ in the domain of the map, we have $f(a)\leq a$. Equivalently, an injective partial map is order decreasing if and only if its matrix form is an upper triangular rook matrix, which motivates the name in the following definition.

\begin{definition}
The {\em planar upper triangular  rook monoid}, denoted by $B_n$, is the monoid of order preserving, order decreasing injective partial maps from $\mathbf{n}$ to $\mathbf{n}$.
\end{definition}

An injective partial map is in $B_n$ if and only if its matrix form is an upper triangular generalized reduced echelon matrix. In the previous example, we have $f\in B_5$, but  $g\notin B_5$.

\section{Order of $B_n$}

We first show that the order of $B_n$ is a Catalan number, which is defined by
$c_0=c_1=1$ and $c_n=\sum_{i=0}^{n-1}c_ic_{n-1-i}$ for
$n>1$ (see \cite{St}).
\begin{proposition}\label{bnCatalan}
 Let $n\ge 0$. Then the order of the planar upper triangular rook monoid $B_n$
 is the Catalan number $c_{n+1}$, that is, $b_n=c_{n+1}$.
\end{proposition}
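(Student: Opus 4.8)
The plan is to recast the enumeration of $B_n$ as the enumeration of a classical Catalan family. First I would describe an element $f\in B_n$ by the pair $(D,R)$ consisting of its domain $D(f)=\{s_1<\cdots<s_k\}$ and its range $R(f)=\{r_1<\cdots<r_k\}$. Since $f$ is injective and order preserving it must send $s_i$ to $r_i$, so $f$ is completely determined by $(D,R)$; and $f$ being order decreasing says exactly that $r_i\le s_i$ for every $i$. Thus $b_n$ equals the number of pairs of equal-size subsets $(D,R)$ of $\n$ with $r_i\le s_i$ for all $i$. The crucial reformulation is that this dominance condition holds if and only if $\delta_j:=|R\cap[j]|-|D\cap[j]|\ge 0$ for all $j\in\n$ (with $[j]=\{1,\dots,j\}$): one direction follows by taking $j=s_i$, and the other is immediate.

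Next I would read $j$ from $1$ to $n$ and record step $j$ as an up step if $j\in R\setminus D$, a down step if $j\in D\setminus R$, a level step of color $1$ if $j\in R\cap D$, and a level step of color $2$ if $j\notin R\cup D$. The height of the resulting path after $j$ steps is precisely $\delta_j$, so the inequality above says the path never dips below $0$, while $|D|=|R|$ forces $\delta_0=\delta_n=0$. This sets up a bijection between $B_n$ and the set of \emph{bicolored Motzkin paths of length $n$}, that is, lattice paths using steps $(1,1)$, $(1,-1)$ and $(1,0)$ that start and end at height $0$, never go below it, and whose level steps each carry one of two colors; the inverse recovers $R$ and $D$ from the step types, since $j\in R$ iff step $j$ is an up step or a color-$1$ level step and $j\in D$ iff step $j$ is a down step or a color-$1$ level step.

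It remains to count the bicolored Motzkin paths of length $n$, say $m_n$ of them. I would use the standard first-return decomposition: a nonempty such path either begins with a level step, giving two color choices followed by a bicolored Motzkin path of length $n-1$, or begins with an up step and first returns to height $0$ at some position $i+2$ with $i\in\{0,\dots,n-2\}$, enclosing a shifted bicolored Motzkin path of length $i$ and followed by one of length $n-2-i$. Hence $m_0=1$ and $m_n=2m_{n-1}+\sum_{i=0}^{n-2}m_im_{n-2-i}$. Comparing this with $c_{n+1}=\sum_{i=0}^{n}c_ic_{n-i}=2c_n+\sum_{i=1}^{n-1}c_ic_{n-i}$ (and reindexing by $j=i+1$), an easy induction gives $m_n=c_{n+1}$, so $b_n=c_{n+1}$. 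For the second promised computation I would enumerate directly instead: $b_n=\sum_{k=0}^{n}N(n,k)$, where $N(n,k)$ counts dominant pairs of $k$-subsets; viewing each subset as a monotone lattice path, a non-intersecting lattice path count (Lindstr\"om--Gessel--Viennot) gives $N(n,k)=\binom{n}{k}^2-\binom{n}{k-1}\binom{n}{k+1}$, and summing over $k$ by Vandermonde's identity yields $b_n=\binom{2n}{n}-\binom{2n}{n-2}$, which equals $c_{n+1}$ by a one-line Catalan identity.

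\textbf{Main obstacle.} The individual computations are routine; the care goes into the bookkeeping. For the bijective argument one must check that the dominance condition is equivalent to nonnegativity of the path \emph{at every partial point} (in particular that a down step is never forced below $0$), that the two colors correctly separate the cases $j\in R\cap D$ and $j\notin R\cup D$ so that the construction is genuinely invertible, and that the first-return recursion matches the Catalan recursion after the shift $m_n\leftrightarrow c_{n+1}$, boundary terms included. For the direct route the delicate points are verifying that dominance is exactly the condition under which the two associated lattice paths become non-intersecting, so that the Lindstr\"om--Gessel--Viennot determinant applies, and confirming the identity $\binom{2n}{n}-\binom{2n}{n-2}=c_{n+1}$.
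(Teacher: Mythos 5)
Your argument is correct, but it reaches $c_{n+1}$ by a genuinely different route than the paper. The paper encodes $f\in B_n$ by the \emph{gap} sequences of its domain $S$ and range $T$, interleaving them into a ballot sequence of $n+1$ ones and $n+1$ minus-ones with nonnegative partial sums, and then quotes the standard fact that such sequences number $c_{n+1}$; the one delicate point there is that nonnegativity is only checked at the block boundaries $l=s_1+t_1+\cdots+s_h+t_h$ and must be propagated to all $l$ by the shape of the sequence. You instead scan the positions $j=1,\dots,n$ and record the membership status of $j$ in $R$ and $D$, producing a bicolored Motzkin path of length $n$; your reformulation of the dominance condition $r_i\le s_i$ as nonnegativity of $\delta_j=|R\cap[j]|-|D\cap[j]|$ for every $j$ is verified correctly in both directions, and the resulting bijection has an immediate inverse. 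The cost is an extra counting step: you must still show that bicolored Motzkin paths of length $n$ number $c_{n+1}$, which your first-return recursion $m_n=2m_{n-1}+\sum_{i=0}^{n-2}m_im_{n-2-i}$ does correctly against $c_{n+1}=2c_n+\sum_{i=1}^{n-1}c_ic_{n-i}$. Your second route is also sound and yields something the paper's bijection does not: the rank-refined count $N(n,k)=\binom{n}{k}^2-\binom{n}{k-1}\binom{n}{k+1}$ via Lindstr\"om--Gessel--Viennot, whence $b_n=\binom{2n}{n}-\binom{2n}{n-2}=c_{n+1}$ by Vandermonde (this refinement is in a different parameter from the paper's recursive formula for $b_{p,q}$). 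Either of your two arguments, with the bookkeeping you flag written out, constitutes a complete proof.
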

\begin{proof}
To prove the proposition, we set up a one-to-one correspondence
between the set $B_n$ and the set $C_{n+1}$ of all sequences
$a_1,a_2,\dots,a_{2n+2}$ of $n+1$ copies of 1's and $n+1$ copies of $-1$'s, such that
$a_1+a_2+\dots+a_l\geq 0$ for all $1\leq l\leq 2n+2$.

Let $f$ be an element of $B_n$
with domain $S=\{s_1<s_2<\dots<s_k\}$ and range
$T=\{t_1<t_2<\dots<t_k\}$. Define $s_1^\p=s_1, \,s_i^\p=s_i-s_{i-1}$
for $2\leq i\leq k$, and $s_{k+1}^\p=n+1-s_k$. Also define
$t_1^\p=t_1, \,t_i^\p=t_i-t_{i-1}$ for $2\leq i\leq k$, and
$t_{k+1}^\p=n+1-t_k$. Let $f^\p$ be the sequence
\begin{equation}\label{sequence}
\underbrace{1,\,\dots,\,1}_{s_1^\p},\, \underbrace{-1,\,\dots,\,-1}_{t_1^\p},\,
\underbrace{1,\,\dots,\,1}_{s_2^\p}, \underbrace{-1,\dots,-1}_{t_2^\p}, \,
\dots, \,
\underbrace{1,\,\dots,\,1}_{s_{k+1}^\p},\, \underbrace{-1,\,\dots,\,-1}_{t_{k+1}^\p} ~.
\end{equation}
Now we prove $f^\p\in C_{n+1}$.  By definition of $B_n$, we have
$t_i\leq s_i$ for all $1\leq i\leq n$. Thus
$s_1^\p-t_1^\p=s_1-t_1\geq 0, \,(s_1^\p+s_2^\p+\dots+s_i^\p)
-(t_1^\p+t_2^\p+\dots+t_i^\p)=s_i-t_i\geq 0$ for $2\leq i\leq k$ and
$(s_1^\p+s_2^\p+\dots+s_{k+1}^\p)
-(t_1^\p+t_2^\p+\dots+t_{k+1}^\p)=(n+1)-(n+1)\geq 0$. Denote the
partial sum of the first $l$ items in (\ref{sequence}) by $a_l$.
Then our previous argument shows that $a_l\geq 0$ for
$l=s_1+t_1+s_2+t_2+\dots+s_h+t_h$, where $1\leq h\leq k+1$, and this
implies $a_l\geq 0$ for $1\leq l\leq 2+2n$ by the format of
(\ref{sequence}). Hence, $f^\p\in C_{n+1}$.

We next show that the mapping from $B_n$ to $C_{n+1}$ defined by
\begin{eqnarray*}
   \sigma:B_n&\rightarrow& C_{n+1} \\
  f &\mapsto& f^\p
\end{eqnarray*}
is bijective. From the definition of the map $\sigma$, it is straightforward to
see that $\sigma$ is injective. Now we prove the onto property of
the map $\sigma$. For arbitrary element
$f^\p=\{a_1,a_2,\dots,a_{2n+2}\}$ of $C_{n+1}$, the condition
$a_1+a_2+\dots+a_l\geq 0$ for all $1\leq l\leq 2n+2$ implies $a_1=1$
and $a_{2n+2}=-1$. As in (\ref{sequence}), let $s_1^\p$ be the number
of the first consecutive 1's in $f^\p$, let $t_1^\p$ be the number of
the consecutive $-1$'s that follow, and define similarly $s_i^\p$ and
$t_i^\p$ as before for $2\leq i\leq k+1$. Let
$s_1=s_1^\p,\, t_1=t_1^\p,\,  \,s_i=s_1^\p+s_2^\p+\dots+s_i^\p$, and $t_i=t_1^\p+t_2^\p+\dots+t_i^\p$ for $2\leq i\leq k$. Then the condition $a_1+a_2+\dots+a_l\geq 0$ for all
$1\leq l\leq 2n+2$ implies $t_i\leq s_i$ for $1\leq i\leq k$.
Define $f$ to be the mapping $f(s_i) = t_i$ with domain $s_1,s_2,\dots,s_k$
and range $t_1,t_2,\dots,t_k$. Then
$f\in B_n$ and $\sigma(f)=f^\p$. Hence $\sigma$
is a one-to-one correspondence between the set $B_n$ and the set
$C_{n+1}$. It is well known that the order of $C_{n+1}$ is the
Catalan number $c_{n+1}$, so the proposition is proved.
\end{proof}
\begin{remark}
{\rm
    Clearly $B_n$ is a submonoid of the monoid $\mathscr B_n$ consisting of all order decreasing (not necessarily planar) injective partial maps. See \cite{BRR} for the order of $\mathscr B_n$.
}
\end{remark}

The following corollary is immediate.
\begin{corollary}
    The number of upper triangular generalized reduced echelon matrices of size $n$ is the Catalan number $c_{n+1}$.
\end{corollary}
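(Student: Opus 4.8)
The plan is to observe that this corollary is nothing more than a restatement of Proposition \ref{bnCatalan} through the matrix representation of injective partial maps that was set up in Section 2. Recall that to each injective partial map $f$ of $\n$ we associate its matrix form, the $n\times n$ rook matrix whose $(i,j)$ entry is $1$ exactly when $f(j)=i$ and $0$ otherwise. I would first note that this assignment $f\mapsto (\text{matrix of }f)$ is a bijection: it is injective because the matrix records $D(f)$, $R(f)$, and the assignment $j\mapsto f(j)$ completely; and, restricted to $B_n$, it is onto the set of upper triangular generalized reduced echelon matrices of size $n$ precisely by the characterization recalled just before Section 3, namely that a size-$n$ matrix is an upper triangular generalized reduced echelon matrix if and only if it is the matrix form of an order preserving, order decreasing injective partial map of $\n$, i.e. of an element of $B_n$.

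Having pinned down this bijection, the remaining step is immediate: the number of upper triangular generalized reduced echelon matrices of size $n$ equals $|B_n| = b_n$, which by Proposition \ref{bnCatalan} is the Catalan number $c_{n+1}$.

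I do not anticipate any genuine obstacle. The only point requiring a line of care is making the dictionary between maps and matrices explicit enough to see that it is a bijection that carries $B_n$ exactly onto the upper triangular generalized reduced echelon matrices; but this has already been established in Section 2, so the proof amounts to citing Proposition \ref{bnCatalan} and that correspondence.
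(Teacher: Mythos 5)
Your proposal is correct and matches the paper's reasoning exactly: the paper states the corollary as immediate from Proposition \ref{bnCatalan} together with the identification, made at the end of Section 2, of elements of $B_n$ with upper triangular generalized reduced echelon matrices. Your only addition is to spell out that this identification is a bijection, which is a harmless elaboration of the same argument.
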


Our next proposition provides a recursive formula for calculating the order $b_n$ of
\begin{equation*}\label{bn}
B_n=\{f\in R_n \mid  f(j)\leq j \;\mathrm{for}\; j\in D(f);\, f(i)<f(j)
\;\mathrm{for}\; i,j\in D(f) \;\mathrm{and}\; i<j\}~.
\end{equation*}
For $0\leq p,\,q\leq n-1$, let $b_{p,\,q}=|B_{p,\,q}|$ where
\[
    B_{p,\,q}=\{f\in B_n \mid  D(f)\subseteq \{n-q,\, \dots,\,n\} \text{ and }R(f)\subseteq \{n-p,\, \dots,\, n\}\}~.
\]
\begin{proposition}\label{bnre} Let $n\ge 1$. Then
\vspace{-2mm}
\begin{enumerate}[{\rm(1)}]
  \item $b_0=1$,\, $b_1=2$~.
  \vspace{-2mm}
  \item $b_{p,\,0}=p+2$\hspace{3.5cm} for $0\le p\le n-1$~.
  \vspace{-2mm}
  \item $b_{p,\,p}=b_{p+1}$\quad\quad\quad\quad\quad\quad\quad\quad\quad\, for $0\le p\le n-1$~.
  \vspace{-2mm}
  \item $b_n = 2b_{n-1} + 1 + \sum_{q=0}^{n-3}b_{n-2,\,q}$\quad\, for $n\geq 2$~.
  \vspace{-2mm}
  \item $b_{p,\,q}=1+\sum_{r=0}^q b_{p-1,\,r}$\quad\quad\quad\quad\, for $1\leq q<p\leq n-1$~.
\end{enumerate}
\end{proposition}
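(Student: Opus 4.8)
The plan is to prove all five items by direct combinatorial case analysis, peeling off a single point of the domain (equivalently, of the range); only item (5), and its cousin (4), carry any real content. Parts (1)--(3) are quick. For (1), $B_0$ consists only of the empty map and $B_1=\{0,\mathrm{id}\}$, so $b_0=1$ and $b_1=2$. For (2), a map in $B_{p,0}$ has domain contained in the singleton $\{n\}$, hence is either $0$ or sends $n$ to one of the $p+1$ admissible values $n-p,\dots,n$, giving $b_{p,0}=p+2$. For (3), the order isomorphism $\{n-p,\dots,n\}\to\{1,\dots,p+1\}$ carries $B_{p,p}$ bijectively onto $B_{p+1}$ and preserves both the order-preserving and the order-decreasing conditions, so $b_{p,p}=b_{p+1}$.

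The heart of the proposition is (5). I would fix $f\in B_{p,q}$ with $1\le q<p\le n-1$ and condition on whether $n-p\in R(f)$. If $n-p\notin R(f)$, then $R(f)\subseteq\{n-p+1,\dots,n\}$, and since $q<p$ the domain window $\{n-q,\dots,n\}$ already excludes $n-p$, so $f\in B_{p-1,q}$; this case contributes exactly $b_{p-1,q}$. If $n-p\in R(f)$, then $n-p=\min R(f)$, so by order-preservation $n-p=f(m)$ with $m=\min D(f)=n-q+s$ for some $0\le s\le q$; the restriction $g=f|_{D(f)\setminus\{m\}}$ then has $D(g)\subseteq\{n-q+s+1,\dots,n\}$ and $R(g)\subseteq\{n-p+1,\dots,n\}$, i.e. $g\in B_{p-1,\,q-s-1}$ (with $g$ forced to be the empty map when $s=q$), and conversely appending the assignment $m\mapsto n-p$ to any such $g$ recovers $f$. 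Summing over $s$ gives $\sum_{s=0}^{q}b_{p-1,\,q-s-1}=1+\sum_{r=0}^{q-1}b_{p-1,r}$, and adding the two cases yields $b_{p,q}=b_{p-1,q}+1+\sum_{r=0}^{q-1}b_{p-1,r}=1+\sum_{r=0}^{q}b_{p-1,r}$.

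For (4) I would run the same argument on all of $B_n$, now conditioning on whether $1\in R(f)$. If $1\notin R(f)$, then order-decreasingness forces $1\notin D(f)$ as well, so $f$ lives on $\{2,\dots,n\}$ and there are $b_{n-1}$ such maps. If $1\in R(f)$, then $1=f(m)$ with $m=\min D(f)\in\{1,\dots,n\}$ and $g=f|_{D(f)\setminus\{m\}}$ ranges exactly over $B_{n-2,\,n-m-1}$ (empty when $m=n$); summing over $m$ gives $\sum_{m=1}^{n}b_{n-2,\,n-m-1}=1+\sum_{r=0}^{n-2}b_{n-2,r}$. Hence $b_n=b_{n-1}+1+\sum_{r=0}^{n-2}b_{n-2,r}$, and splitting off the top term and rewriting $b_{n-2,\,n-2}=b_{n-1}$ via (3) yields $b_n=2b_{n-1}+1+\sum_{q=0}^{n-3}b_{n-2,q}$ for $n\ge 2$, the final sum being empty when $n=2$.

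No step is deep, but several need care: checking in (5) that $q<p$ is precisely what makes $\{n-p\}$ disjoint from the domain window, so that the first case collapses cleanly onto $B_{p-1,q}$; verifying that appending $m\mapsto n-p$ (resp. $m\mapsto 1$) to an arbitrary admissible $g$ always lands back in $B_n$ and that every $f$ in the relevant case is obtained uniquely this way, so that these constructions are genuine bijections; and correctly booking the boundary contributions ($s=q$ in (5), $m=n$ in (4)) that supply the ``$+1$'' terms. I would also check the base case $n=2$ of (4) and note that $q=p-1$ is permitted in (5), so the diagonal value $b_{p-1,p-1}$ may legitimately appear in its sum. I expect the mildly fiddly bijection bookkeeping, rather than any conceptual point, to be the main obstacle.
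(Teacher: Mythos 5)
Your proposal is correct and uses essentially the same decomposition as the paper: split according to whether the minimal admissible range value ($1$ for part (4), $n-p$ for part (5)) lies in $R(f)$, and then sub-count by the location of its preimage, which order-preservation forces to be $\min D(f)$. The paper merely asserts the resulting three cardinality identities in each case, whereas you spell out the bijections (and correctly account for the boundary "$+1$" terms and the use of (3) to rewrite $b_{n-2,\,n-2}$ as $b_{n-1}$), so no substantive difference remains.
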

\begin{proof}
Parts (1), (2), and (3) are clear. To prove (4), divide the elements of $B_n$ into two groups: the elements whose ranges contain $1$, and those whose ranges do not contain $1$. Part (4) follows from the following three identities:
\vspace{-2mm}
\begin{align*}
  b_{n-1}&=|\{f\in B_n \mid  f(1)=1\}| = |\{f\in B_n \mid  1\notin R(f)\}|~. \\
  b_{n-2,\,q}&=|\{f\in B_n\mid f(n-1-q)=1\}|\quad \text{for}\quad 0\leq q\leq n-3 ~. \\
  1 &= |\{f\in B_n\mid f(n)=1\}|~.
\end{align*}
Similarly, for part (5), we divide the elements of $B_{p, q}$ into two groups: the elements whose ranges contain $n-p$, and those whose ranges do not contain $n-p$. Part (5) follows from the following three identities:
\begin{align*}
    b_{p-1,\,q}&=|\{f\in B_{p,q}\mid n-p\notin R(f)\}|~.\\
    b_{p-1,\,r}&=|\{f\in B_{p,q}\mid f(n-1-r)=n-p\}|\quad\text{for}\quad 0\leq r\leq q-1~.\\
    1 &=|\{f\in B_{p,q}\mid f(n)=n-p\}|~.
\end{align*}
\end{proof}

\section{Modules for $B_n$}
A vector space $V$ over a field $F$ of characteristic $0$ is called a $B_n$-module if $B_n$ acts on $V$ satisfying, for all $f, f_1, f_2\in B_n$, $u, v\in V$, and $\lambda\in F$,
\begin{eqnarray*}
  f\cdot (u+v) &= f\cdot u + f\cdot v, \quad\quad\quad f_1\cdot (f_2\cdot u) &= (f_1f_2)\cdot u, \\
  f\cdot (\lambda u)  &= \lambda (f\cdot u),~\quad\quad\quad\quad\quad\quad\quad 1\cdot u &= u.
\end{eqnarray*}

From now on, $V$ denotes a vector space with a basis
$
   \mathcal{B} = \{v_S\mid S\subseteq\n\}
$
indexed by all the subsets of $\n$. Then $V=\bigoplus_{S\subseteq\n} Fv_S$ as subspaces is a $B_n$-module with respect to the following action: for $f\in B_n$ and $S\subseteq\n$,
\begin{equation}\label{moddef}
f\cdot v_S=
\left\{
  \begin{array}{ll}
    v_{S^\p}, & \hbox{if\; $S\subseteq D(f)$} \\
    0, & \hbox{otherwise,}
  \end{array}
\right.
\end{equation}
where $S^\p=\{f(s_1),\dots,f(s_k)\}$ if $S=\{s_1,\dots,s_k\}$. For $0\le k \le n$, let
$$
    V_k=\mathrm{span}\{v_S\in\mathcal{B}\mid k=|S|\}.
$$
Then $V=\bigoplus^n_{k=0}V_k$ is a direct sum of $B_n$-submodules.

Every module under consideration is a $B_n$-module over $F$, unless otherwise stated.
Our intent below is to describe the $B_n$-module structure of $V_k$ and $V$. To this end, we
define a partial order on the power set of $\n$. For any $k$-subsets
$S=\{s_1<\dots<s_k\}$ and $T=\{t_1<\dots<t_k\}$ of $\n$, define
\[
    T\leq S \quad \Leftrightarrow \quad t_i\leq s_i \quad\text{for all}\quad i\in \mathbf{k}~,
\]
and a $k$-subset is not comparable to any $l$-subset if $k\ne l$.

For $v\in V$ we use $B_nv$ to denote the cyclic submodule of $V$ generated by $v$. If $S$ is a $k$-subset of $\n$, then $B_nv_S$ is a submodule of $V_k$. Indeed, for any $f\in B_n$ if $S\subseteq D(f)$ then $f(S)$ is a $k$-subset, so $f\cdot v_S = v_{f(S)}\in V_k$; if $S$ is not a subset of $D(f)$ then $f\cdot v_S = 0\in V_k$.  Some further properties of the module $B_nv_S$ are described in the next result.
\begin{lemma}\label{mod1} Let $S,T$ be $k$-subsets of $\n$.

{\rm(1)} $B_nv_S = \bigoplus_{S'\subseteq\n,\, S'\leq S}Fv_{S'}$ as vector spaces. In particular, $V_k=B_nv_{\{n-k+1,\, \ldots,\, n\}}$.

{\rm(2)} $B_nv_T\subseteq B_nv_S$ if and only if $T\leq S$.

{\rm(3)} $B_nv_S\cap B_nv_T = B_nv_{S\wedge T}$, where $S\wedge T$ is the greatest lower bound of $S$ and $T$.
\end{lemma}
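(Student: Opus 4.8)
The plan is to establish (1) directly from the action (\ref{moddef}), and then derive (2) and (3) as consequences. For (1), the key observation is that for $f\in B_n$ with $S\subseteq D(f)$, the set $f(S)$ is obtained from $S$ by applying an order-preserving, order-decreasing injective map, so writing $S=\{s_1<\dots<s_k\}$ we get $f(S)=\{f(s_1)<\dots<f(s_k)\}$ with $f(s_i)\le s_i$ for all $i$; hence $f(S)\le S$ in the partial order. This shows $B_nv_S\subseteq\bigoplus_{S'\le S}Fv_{S'}$. For the reverse inclusion, given any $k$-subset $S'=\{s_1'<\dots<s_k'\}\le S=\{s_1<\dots<s_k\}$, I would exhibit an explicit $f\in B_n$ with $f(s_i)=s_i'$: take $D(f)=S$, $R(f)=S'$, and $f(s_i)=s_i'$. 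Since $s_i'\le s_i$ this is order-decreasing, and since $s_1'<\dots<s_k'$ it is order-preserving, so $f\in B_n$ and $f\cdot v_S=v_{S'}$. This proves the spanning claim; linear independence is automatic as the $v_{S'}$ are distinct basis vectors. The special case $V_k=B_nv_{\{n-k+1,\dots,n\}}$ follows because $\{n-k+1,\dots,n\}$ is the top element among $k$-subsets, so every $k$-subset lies below it.

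For (2), the direction ($\Leftarrow$) is immediate from (1): if $T\le S$ then every $T'\le T$ satisfies $T'\le S$ by transitivity, so $\bigoplus_{T'\le T}Fv_{T'}\subseteq\bigoplus_{S'\le S}Fv_{S'}$. For ($\Rightarrow$), if $B_nv_T\subseteq B_nv_S$ then in particular $v_T\in B_nv_S=\bigoplus_{S'\le S}Fv_{S'}$, and since $v_T$ is a basis vector this forces $T\le S$.

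For (3), I would first note that $S$ and $T$ being $k$-subsets, their greatest lower bound $S\wedge T$ in the componentwise order is again a $k$-subset, namely $\{\min(s_1,t_1)<\dots\}$ after verifying these minima are strictly increasing — actually one must be slightly careful, since $\{\min(s_i,t_i)\}$ need not have strictly increasing entries in general, so the meet in the poset of $k$-subsets is the largest $k$-subset $U$ with $U\le S$ and $U\le T$, whose existence follows because the set of common lower bounds is nonempty (it contains $\{1,\dots,k\}$) and closed under the natural join, hence has a maximum. Granting this, by (1) we have $B_nv_S\cap B_nv_T=\bigl(\bigoplus_{S'\le S}Fv_{S'}\bigr)\cap\bigl(\bigoplus_{T'\le T}Fv_{T'}\bigr)=\bigoplus_{U'\le S,\,U'\le T}Fv_{U'}=\bigoplus_{U'\le S\wedge T}Fv_{U'}=B_nv_{S\wedge T}$, where the middle equality uses that these are spans over disjoint-versus-common subsets of the basis $\mathcal B$, and the last equality uses that $U'\le S$ and $U'\le T$ iff $U'\le S\wedge T$ by the defining property of the meet.

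The main obstacle is the existence and correct description of the greatest lower bound $S\wedge T$ in part (3): the naive componentwise minimum can fail to be a valid $k$-subset (its entries need not be strictly increasing), so some care is needed to argue that the poset of $k$-subsets under $\le$ is a lattice, or at least that meets exist. Everything else reduces to unwinding definitions and the elementary fact that an intersection of coordinate subspaces spanned by subsets of a fixed basis is the subspace spanned by the intersection of those index sets.
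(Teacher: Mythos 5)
Your argument is correct. Parts (1) and (2) follow the paper's proof essentially verbatim: identify $f(S)\le S$ for the forward inclusion, construct the explicit $f$ with $D(f)=S$, $R(f)=S'$ for the reverse, and deduce (2) by comparing index sets. For part (3) you take a genuinely different route: the paper proves both inclusions by explicitly building elements of $B_n$ (given $g\cdot v_S=h\cdot v_T\ne 0$ it defines $f$ on $S\wedge T$ by $f(\min(s_i,t_i))=g(s_i)$, and conversely), whereas you reduce everything to part (1) plus the elementary fact that an intersection of spans of subsets of a fixed basis is the span of the intersection of the index sets, together with the lattice identity $\{U\mid U\le S\}\cap\{U\mid U\le T\}=\{U\mid U\le S\wedge T\}$. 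Your route is arguably cleaner, since it avoids having to check that the paper's $f$ is order-decreasing (which requires observing $g(s_i)\le s_i$ and $g(s_i)=h(t_i)\le t_i$). One correction, though: the ``obstacle'' you flag is not actually there. For $k$-subsets $S=\{s_1<\dots<s_k\}$ and $T=\{t_1<\dots<t_k\}$, the componentwise minima \emph{are} always strictly increasing: whichever of $s_{i+1},t_{i+1}$ realizes $\min(s_{i+1},t_{i+1})$ strictly exceeds the corresponding entry at index $i$, which in turn is at least $\min(s_i,t_i)$. So $\{\min(s_1,t_1),\dots,\min(s_k,t_k)\}$ is always a valid $k$-subset and is immediately seen to be the greatest lower bound (any common lower bound $U$ has $u_i\le s_i$ and $u_i\le t_i$, hence $u_i\le\min(s_i,t_i)$). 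Your fallback argument via join-closure of the set of common lower bounds is valid but unnecessary; the direct description is what the paper uses and what your displayed chain of equalities implicitly relies on.
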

\begin{proof}

To prove (1) notice that two subsets $S'\leq S$ if and only if $S=D(f)$ and $S'=R(f)$ for a unique $f\in B_n$. Let $S'\le S$. Then $v_{S'} = f\cdot v_S\in B_nv_S$. Hence $\bigoplus_{S'\subseteq\n,\, S'\leq S}Fv_{S'}$ is included in $B_nv_S$. Conversely, let $x=g\cdot v_S \ne 0$ for some $g\in B_n$. We have $S\subseteq D(g)$, $g(S)\le S$, and hence $x=v_{g(S)}\in \bigoplus_{S'\subseteq\n,\, S'\leq S}Fv_{S'}$. The second part of (i) is now clear.

The proof of (2) follows from (1) since $\{T'\mid T'\subseteq\n,\, T'\leq T\}\subseteq\{S'\mid S'\subseteq\n,\, S'\leq S\}$ if and only if $T\le S$.

To prove (3) let $g\cdot v_S = h\cdot v_T\ne 0$ for some $g, h\in B_n$. Then $g(S)=h(T)$. Suppose
\[
    S = \{s_1< \ldots < s_k\}\quad\text{and}\quad T = \{t_1< \ldots < t_k\}~.
\]
Then
$
    S \wedge T =\{\min(s_1, t_1),\, \ldots,\, \min(s_k, t_k)\},
$
and $g(s_i)=h(t_i)$. We define $f\in B_n$ with $D(f)=S \wedge T $ and $R(f)=g(S)$ by
$
    f(\min(s_i,\, t_i)) =  g(s_i),
$
where $1\le i\le k$. Then $g\cdot v_S = f\cdot v_{S\wedge T} \in B_n v_{S\wedge T}$, and hence $B_nv_S\cap B_nv_T \subseteq B_nv_{S\wedge T}$.
Conversely, for any given $0\ne f\cdot v_{S\wedge T} \in B_n v_{S\wedge T}$ define $g(s_i)=h(t_i)= f(\min(s_i,\, t_i))$ for $1\le i\le k$. Then $f\cdot v_{S\wedge T} = g\cdot v_S=h\cdot v_T\in B_nv_S\cap B_nv_T$. The proof of (3) is complete.
\end{proof}

    Let $v=\sum_{S\subseteq \n}\lambda_Sv_S, \lambda_S\in F$ be a vector of $V$. The {\em support} of $v$ is defined to be
    \[
        {\rm supp}(v) = \{S\subseteq\n\mid \lambda_S \ne 0\}~.
    \]
%
\begin{definition}\label{redGen}
    A vector of the form $w=\sum_{S\in {\rm supp}(w)}v_S\in V$ is called a {\em reduced generator} of a submodule $W$ of $V$ if $W=B_nw$ and $W$ cannot be generated by any other vector whose support contains fewer elements than {\rm supp(}$w${\rm )}. We agree that $0$ is the reduced generator of the zero submodule.
\end{definition}
The next proposition gives some properties of submodules of $V$.
\begin{proposition}\label{cyclic} Let $v=\sum_{S\in\,{\rm supp(}v{\rm )}}\lambda_Sv_S\in V$.

{\rm (1)} If $S$ is in {\rm supp}$(v)$, then $v_S\in B_nv$~.

{\rm (2)} $B_nv = \bigoplus_{T\in \mathcal{P}(v)} Fv_T$ as subspaces, where $\mathcal{P}(v) = \bigcup_{S\in {\rm supp}(v)}\{T\subseteq\n\mid T\le S\}$.

{\rm (3)} Every submodule of $V$ is cyclic and contains a unique reduced generator.
\end{proposition}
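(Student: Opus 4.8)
The plan is to prove the three parts in sequence, using Lemma~\ref{mod1} as the workhorse and the partial order on $\mathcal P(\n)$ as the organizing principle. For part (1), given $S\in{\rm supp}(v)$, I would pick $f\in B_n$ with $D(f)=R(f)=S$, i.e.\ $f$ is the idempotent that is the identity on $S$ and undefined elsewhere. Then $f\cdot v_T = v_T$ if $T\subseteq S$ and $0$ otherwise; in particular $f\cdot v_S=v_S$ while $f$ kills every $v_{S'}$ with $S'$ not contained in $S$. This does not immediately isolate $v_S$ because other $S'\subseteq S$ in ${\rm supp}(v)$ survive. The standard fix is a downward induction on $|S|$ combined with a second application of $B_n$ to strictly shrink the surviving supports: after applying the idempotent on $S$, every surviving basis vector $v_{S'}$ has $S'\subseteq S$; if some $S'\subsetneq S$ survives, apply an element of $B_n$ that maps $S$ injectively and order-preservingly to a proper lower set so that $v_S$ goes to some $v_{S_0}$ with $S_0$ minimal among the images, and iterate. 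A cleaner route: order ${\rm supp}(v)$ by a linear extension of $\le$, take $S$ to be a maximal element of ${\rm supp}(v)$ first (then the idempotent on $S$ already isolates $v_S$, since no other support element is $\subseteq S$), conclude $v_S\in B_nv$; subtract $\lambda_S v_S$ and induct. This reduces the claim for arbitrary $S$ to the maximal case, but one still needs arbitrary $S$, not just maximal ones — so I would instead induct on $|{\rm supp}(v)|$: having shown $v_{S_{\max}}\in B_nv$ for a maximal $S_{\max}$, note $v' := v-\lambda_{S_{\max}}v_{S_{\max}}\in B_nv$ has smaller support, and $B_n v'\subseteq B_n v$; by induction every $v_S$ with $S\in{\rm supp}(v')={\rm supp}(v)\setminus\{S_{\max}\}$ lies in $B_nv'\subseteq B_nv$, and together with $v_{S_{\max}}$ this gives all of ${\rm supp}(v)$.

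For part (2), the inclusion $\bigoplus_{T\in\mathcal P(v)}Fv_T\subseteq B_nv$ is immediate from part (1) together with Lemma~\ref{mod1}(1): each $v_S\in B_nv$ for $S\in{\rm supp}(v)$, and then $B_nv_S=\bigoplus_{T\le S}Fv_T\subseteq B_nv$, so the union over $S\in{\rm supp}(v)$ of all such $v_T$ — which is exactly $\{v_T\mid T\in\mathcal P(v)\}$ — lies in $B_nv$. For the reverse inclusion, a general element of $B_nv$ is a linear combination of vectors $f\cdot v = \sum_{S\subseteq D(f)}\lambda_S v_{f(S)}$; each $f(S)$ with $S\subseteq D(f)$ satisfies $f(S)\le S$ by order-decreasingness, hence $f(S)\in\mathcal P(v)$, so $f\cdot v\in\bigoplus_{T\in\mathcal P(v)}Fv_T$ and the whole module is contained there. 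Directness of the sum is automatic since the $v_T$ are distinct basis vectors of $V$.

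For part (3), cyclicity follows by applying parts (1)–(2) to an arbitrary submodule $W$: pick any $0\ne w\in W$, then by part (1) every $v_S$ with $S\in{\rm supp}(w)$ lies in $W$, so $W$ contains the finite-dimensional space $\bigoplus_{T\in\mathcal P(w')}Fv_T$ for suitable vectors; taking $w_1$ to be the sum of $v_S$ over the set of all $S$ such that $v_S\in W$, one checks $B_nw_1 = W$ — indeed $B_nw_1\subseteq W$ trivially, and $W\subseteq B_nw_1$ because every $v_S\in W$ appears in the support of $w_1$ so lies in $B_nw_1$ by part (1), and $W$ is spanned by such $v_S$ (again by part (1) applied to each element of $W$). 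This $w_1 = \sum_{v_S\in W}v_S$ is the candidate reduced generator: it has coefficients all equal to $1$, and its support $\{S\mid v_S\in W\}$ is forced, since any generator $w$ of $W$ must have ${\rm supp}(w)\subseteq\{S\mid v_S\in W\}$ (as $B_nw=W$ means each $v_S$ in ${\rm supp}(w)$ is in $W$), so $|{\rm supp}(w)|\le|{\rm supp}(w_1)|$; minimality of $|{\rm supp}|$ among generators is therefore only achieved when ${\rm supp}(w)$ is as small as possible — and here I must argue that no smaller support can generate $W$, because $\dim W = |\{S\mid v_S\in W\}|$ by part (2) while $\dim B_nw \le \sum_{S\in{\rm supp}(w)}|\{T\mid T\le S\}|$, but more to the point, Lemma~\ref{mod1}(1) forces $\{S\mid v_S\in W\}=\mathcal P(w)$, and one shows $\mathcal P(w)$ cannot equal this set unless ${\rm supp}(w)$ contains every $\le$-maximal element of it; hence the reduced generator's support is exactly the set of maximal elements of $\{S\mid v_S\in W\}$, forcing uniqueness of the coefficients ($=1$ by definition of reduced generator) and of the support.

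The main obstacle I anticipate is the uniqueness half of part (3): pinning down precisely which support a reduced generator must have. The resolution is the observation that $\mathcal P(w)$ depends only on the $\le$-maximal elements of ${\rm supp}(w)$ — adding or removing non-maximal support elements does not change $B_nw$ — so the reduced (minimum-support) generator has support equal to the antichain of maximal elements of $\{S\mid v_S\in W\}$, and since that antichain is canonically determined by $W$, and all coefficients are $1$ by fiat, the reduced generator is unique. I would state this antichain fact as a short sublemma, prove it from Lemma~\ref{mod1}(1)–(2), and then everything in part (3) falls out.
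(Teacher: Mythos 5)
Your parts (2) and (3) follow the paper's route and are essentially sound, but part (1) — on which everything else rests — contains a real error. Your ``cleaner route'' isolates $v_S$ for a \emph{maximal} $S\in{\rm supp}(v)$ by applying the idempotent $f$ with $D(f)=R(f)=S$, on the grounds that ``no other support element is $\subseteq S$.'' That justification has the containment backwards: maximality of $S$ (whether with respect to $\le$, to $\subseteq$, or to a linear extension) rules out support elements \emph{containing} $S$, not support elements \emph{contained in} $S$, and the idempotent on $S$ fixes every $v_{S'}$ with $S'\subseteq S$. Concretely, for $v=v_{\{1\}}+v_{\{1,2\}}$ the set $\{1,2\}$ is maximal in ${\rm supp}(v)$ in every sense, yet the idempotent on $\{1,2\}$ sends $v$ to itself and isolates nothing. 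Since your induction on $|{\rm supp}(v)|$ needs this isolation step as its engine, the argument as written does not go through.

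The repair is exactly the opposite choice, and it is what the paper does: start from a support element $T$ of \emph{minimal cardinality} $r$. Then every other $S\in{\rm supp}(v)$ satisfies $|S|\ge r$ and $S\ne T$, hence $S\not\subseteq T$, so the idempotent with domain and range $T$ kills $v_S$ and sends $v$ to $\lambda_T v_T$. Subtracting all the cardinality-$r$ terms (each handled the same way) and iterating on the remainder gives $v_S\in B_nv$ for all $S\in{\rm supp}(v)$. With part (1) fixed this way, your part (2) (both inclusions plus automatic directness of the sum of distinct basis lines) and your part (3) (generator $\sum_{v_S\in W}v_S$, reduced generator supported on the antichain of $\le$-maximal elements of $\{S\mid v_S\in W\}$) coincide with the paper's proof.
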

\begin{proof} To prove (1) let $\min \big\{\,|S| \,\big|\, S \in {\rm supp}(v)\big\}=r$. Then there exists an $r$-subset $T=\{t_1<\cdots < t_r\}\subseteq\n$ such that $T\in {\rm supp}(v)$; if $r=0$, then $T=\emptyset$. Let $f\in B_n$ such that $D(f)=R(f)=T$. By the choice of $r$, for every $S\in{\rm supp}(v)$ with $S\neq T$, there is at least one $s\in S$ such that $s\notin T$, so $f\cdot v_S=0$. Hence
$$
    f\cdot v=f\cdot \sum_{S\in\,{\rm supp(}v{\rm )}}\lambda_Sv_S=\sum_{S\in\,{\rm supp(}v{\rm )}}\lambda_S(f\cdot v_S)=\lambda_Tv_T~.
$$
Thus $v_T\in B_nv$ since $\lambda_T\neq 0$. It is easily seen that
$$
    \sum_{S\in\,{\rm supp(}v{\rm )}\atop |S|>r}\lambda_Sv_S=v-\sum_{S\in\,{\rm supp(}v{\rm )}\atop |S|=r}\lambda_Sv_S\in B_nv~.
$$
Applying the above procedure to
$
    \sum_{S\in\,{\rm supp(}v{\rm )},\,|S|>r}\lambda_Sv_S
$
and iteratively using this procedure, if needed, we get $v_S\subseteq B_nv$ for all $S\in \text{supp}(v)$. The proof of (1) is complete.

From (1) and Lemma \ref{mod1} (1), we have
\begin{eqnarray*}
   B_nv&=& \sum_{S \in\text{\rm supp}(v)}\lambda_SB_nv_S \\
&=&\sum_{S \in\text{\rm supp}(v)}\mathrm{span}\,\{v_T\in\mathcal{B}\mid T\leq S\}\\
&=& \bigoplus_{S\in \mathcal{P}(v)} Fv_S, \quad\text{as subspaces}.
\end{eqnarray*}
This completes the proof of (2).

We now prove (3). It is trivial for $W=\{0\}$. Let W be a nonzero submodule of $V$. We claim that $W$ has a basis $\{v_S\in\mathcal{B}\mid S\in \mathcal{P}\}$ for some subset $\mathcal{P}$ of the power set of $\n$. Indeed, suppose $\mathcal{B}_1$ is a basis of $W$ and write every element of $\mathcal B_1$ as a linear combination of basis vectors in $\mathcal{B}=\{v_S\mid S\subseteq \n\}$. Let $\mathcal P$ be the set of all the different subsets $S$ where $S$ runs through the support of every element of $\mathcal B_1$. By (1) the set $\{v_S\in\mathcal{B}\mid S\in \mathcal{P}\}$ is a subset of $W$, and hence a basis of $W$ since it is linearly independent and spans $W$. Let
$
    w=\sum_{S\in\mathcal{P}} v_S.
$
By (1) again, $W$ is generated by $w$, and hence $W$ is cyclic.

We now show how to deduce a reduced generator of $W$ from $w$. Indeed, if $w$ contains two vectors $v_S, \,v_T$ with $T\le S$ and $T\ne S$ in supp($w$), then we can remove the term $v_T$ from $w$, and by Lemma \ref{mod1} (i) the sum of the remaining terms is still a generator. Repeat this process until we obtain the set
\[
    {\rm Red}(w) = \{S\mid  S \;\text{is maximal in supp}(w)\},
\]
and then we define the corresponding generator $w_{\rm red}$ of $W$ by
\[
    w_{\rm red}=\sum_{S\in{\rm Red}(w)}v_S~.
\]
We claim that $w_{\rm red}$ is a reduced generator of $W$. Let $v = \sum_{S\in\text{supp}(v)}\lambda_Sv_S$ be another generator of $W$. From Definition \ref{redGen} it suffices to show that $|{\rm supp(}v{\rm )}| \ge |{\rm Red(}w{\rm )}|$.
From (2) we find $W = \bigoplus_{T\in \mathcal{P}(v)} Fv_T = \bigoplus_{T\in \mathcal{P}(w)} Fv_T$ where $\mathcal{P}(v)$ and $\mathcal{P}(w)$ are as in (2),
and hence
$
    \mathcal{P}(v) = \mathcal{P}(w).
$
Define
\begin{equation}\label{redv}
    {\rm Red}(v) = \{S\mid  S \text{ is maximal in supp}(v)\}.
\end{equation}
Thus, ${\rm Red}(v) = \{S\mid S \text{ is maximal in }\mathcal{P}(v)\}$ and
${\rm Red}(w) = \{S\mid  S \text{ is maximal in }\mathcal{P}(w)\}$.
So, Red($v$) = Red($w$) and $|{\rm supp(}v{\rm )}| \ge |{\rm Red(}v{\rm )}| = |{\rm Red(}w{\rm )}|$, showing that $w_{\rm red}$ is reduced.

Suppose that $v = \sum_{S\in{\rm supp}(v)}v_S$ is another reduced generator of $W$. By the definition of reduced generators we know $|{\rm supp(}v{\rm )}| = |{\rm Red(}w{\rm )}|$. Hence $|{\rm supp(}v{\rm )}| =  |{\rm Red(}v{\rm )}|$ since Red($v$) = Red($w$). It follows that ${\rm supp(}v{\rm )} = {\rm Red(}v{\rm )}$. Let $v_{\rm red}=\sum_{S\in{\rm Red}(v)}v_S$. Then $v=v_{\rm red}=w_{\rm red}$. Therefore $w_{\rm red}$ is the unique reduced generator of $W$.
\end{proof}

\begin{definition} The set ${\rm Red}(v)$ in {\rm (\ref{redv})} is called the {\em reduced support} of $v$, and the element $v_{\rm red}=\sum_{S\in{\rm Red}(v)}v_S$ is termed the {\em reduced form} of $v$. The reduced support of $0$ is empty, and the reduced form of $0$ is itself.
\end{definition}

For example, if $n=7$ and $v = v_\emptyset - 2v_{\{1\}} + v_{\{3\}} + 5 v_{\{1, \,2\}} + 3v_{\{4,\, 7\}} - 2v_{\{5, \,6\}} + v_{\{1, \,2, \,3\}}$, then Red($v$) = $\{\emptyset,\,\{3\},\,\{5,\, 6\},\,\{4,\, 7\}, \{1, \,2, \,3\}\}$ is the reduced support of $v$, and its reduced form is $v_{\rm red} = v_\emptyset + v_{\{3\}} + v_{\{4,\, 7\}} + v_{\{5, \,6\}} + v_{\{1, \,2, \,3\}}$.

It is sometimes convenient to call the reduced support of $v$ the {\em reduced support} of the module $B_nv$.
A direct calculation yields that the reduced generator of $V_k$ is $v_{\{n-k+1,\,\ldots,\,n\}}$ for $1\le k\le n$, and the reduced support of $V_k$ is the set $\{n-k+1,\,\ldots,\,n\}$. The module $V_0$ has the element $v_\emptyset$ as its reduced generator, and its reduced support is the set $\{\emptyset\}$.

The next result is a consequence of Lemma \ref{mod1} (i) and Proposition \ref{cyclic} (3).

\begin{corollary}\label{eq}
    If $v, w\in V$, then $B_n v = B_n w$ if and only if they have the same reduced support {\rm Red(}v{\rm)} =  {\rm Red(}w{\rm)} if and only if they have the same reduced generator $v_{\rm red} = w_{\rm red}$.
\end{corollary}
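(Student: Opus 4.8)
\textbf{Proof proposal for Corollary \ref{eq}.}

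The plan is to chain together the three equivalences by routing everything through Proposition \ref{cyclic}(2) and the notion of maximal elements. First I would observe that the reduced support $\mathrm{Red}(v)$ is by definition $(\ref{redv})$ the set of maximal elements of $\mathrm{supp}(v)$, and that by Proposition \ref{cyclic}(2) the subspace $B_nv = \bigoplus_{T\in\mathcal P(v)}Fv_T$ depends only on the down-set $\mathcal P(v) = \bigcup_{S\in\mathrm{supp}(v)}\{T\subseteq\n\mid T\le S\}$. Since $\mathcal P(v)$ is a finite down-set in the power-set poset (with the partial order from Section 4, under which subsets of different sizes are incomparable), it is determined by its set of maximal elements, and those maximal elements are exactly the maximal elements of $\mathrm{supp}(v)$, i.e.\ $\mathrm{Red}(v)$. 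This gives the key bookkeeping fact: $\mathcal P(v) = \mathcal P(w)$ if and only if $\mathrm{Red}(v) = \mathrm{Red}(w)$.

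Next I would prove ``$B_nv = B_nw \iff \mathrm{Red}(v)=\mathrm{Red}(w)$.'' For the forward direction, $B_nv = B_nw$ forces $\bigoplus_{T\in\mathcal P(v)}Fv_T = \bigoplus_{T\in\mathcal P(w)}Fv_T$ as subspaces of $V$; since the $v_T$ form part of the fixed basis $\mathcal B$, comparing which basis vectors appear gives $\mathcal P(v)=\mathcal P(w)$, hence $\mathrm{Red}(v)=\mathrm{Red}(w)$ by the previous paragraph. For the reverse direction, if $\mathrm{Red}(v) = \mathrm{Red}(w)$ then $\mathcal P(v)=\mathcal P(w)$, so Proposition \ref{cyclic}(2) gives $B_nv = \bigoplus_{T\in\mathcal P(v)}Fv_T = \bigoplus_{T\in\mathcal P(w)}Fv_T = B_nw$.

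Finally I would handle ``$\mathrm{Red}(v) = \mathrm{Red}(w) \iff v_{\mathrm{red}} = w_{\mathrm{red}}$,'' which is essentially immediate from the definition $v_{\mathrm{red}} = \sum_{S\in\mathrm{Red}(v)}v_S$: two such sums over subsets of the basis-index set $\mathcal B$ agree exactly when the index sets agree, because $\mathcal B$ is linearly independent. The zero case is covered by the conventions already in place ($\mathrm{Red}(0)=\emptyset$, $0_{\mathrm{red}}=0$, $B_n\cdot 0 = \{0\}$). As an alternative, more self-contained route for the whole corollary, I could instead invoke Proposition \ref{cyclic}(3): each of $B_nv$ and $B_nw$ has a \emph{unique} reduced generator, and one checks (using Lemma \ref{mod1}(1) as in the proof of Proposition \ref{cyclic}(3), where it is shown that deleting a non-maximal term leaves a generator) that $v_{\mathrm{red}}$ is the reduced generator of $B_nv$; then $B_nv = B_nw$ iff they have the same reduced generator iff $v_{\mathrm{red}} = w_{\mathrm{red}}$, and the equality with $\mathrm{Red}$ follows by unwinding definitions. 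I expect the main (and only real) obstacle to be purely expository: making sure the step ``a finite down-set is determined by its maximal elements, which coincide with the maximal elements of the support'' is stated cleanly, since everything else is a direct appeal to Proposition \ref{cyclic} and the linear independence of $\mathcal B$.
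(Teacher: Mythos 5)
Your proposal is correct, and it is essentially the paper's intended argument: the paper gives no separate proof, remarking only that the corollary is a consequence of Lemma \ref{mod1}(1) and Proposition \ref{cyclic}(3), and the chain you describe --- $B_nv=B_nw$ iff $\mathcal P(v)=\mathcal P(w)$ (by comparing basis vectors in Proposition \ref{cyclic}(2)) iff ${\rm Red}(v)={\rm Red}(w)$ (a down-set is determined by its maximal elements) iff $v_{\rm red}=w_{\rm red}$ (linear independence of $\mathcal B$) --- is exactly the bookkeeping already carried out inside the proof of Proposition \ref{cyclic}(3). Your alternative route via the uniqueness of the reduced generator is the same argument packaged differently, so there is nothing to add beyond confirming that both directions of each equivalence are covered, which they are.
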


We can now describe the irreducible submodules of $V_k$ for $0\le k\le n$. Write $\bold k = \{1,\,\dots,\,k\}$. If $k=0$, we agree that $\bold k=\emptyset$ and $v_\bold k = v_\emptyset$.
\begin{proposition}\label{irreVk}
For each $0\le k\le n$, the 1-dimensional submodule $B_nv_{\bold k}$ is the only irreducible submodule of $V_k$, and every nonzero submodule of $V_k$ contains $B_nv_{\bold k}$.
\end{proposition}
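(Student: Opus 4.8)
The plan is to exploit the fact that $\bold k=\{1,\dots,k\}$ is the unique minimal $k$-subset of $\n$ under the order $\le$, together with the structural descriptions in Lemma \ref{mod1} and Proposition \ref{cyclic}. First I would observe that for any $k$-subset $S=\{s_1<\dots<s_k\}$ one has $i\le s_i$ for every $i\in\bold k$, so $\bold k\le S$; in particular $\bold k$ is the only $k$-subset $S'$ with $S'\le\bold k$. Feeding this into Lemma \ref{mod1}(1) gives $B_nv_{\bold k}=\bigoplus_{S'\le\bold k}Fv_{S'}=Fv_{\bold k}$, which is a $1$-dimensional submodule of $V_k$. Being nonzero and $1$-dimensional, $B_nv_{\bold k}$ has no proper nonzero submodule, hence is irreducible.

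Next I would show that every nonzero submodule $W$ of $V_k$ contains $B_nv_{\bold k}$. By Proposition \ref{cyclic}(3), $W=B_nw$ for some nonzero $w\in W$, and since $W\subseteq V_k$ every element of ${\rm supp}(w)$ is a $k$-subset. Choosing any $S\in{\rm supp}(w)$, Proposition \ref{cyclic}(1) gives $v_S\in W$, and because $\bold k\le S$, Lemma \ref{mod1}(1) yields $v_{\bold k}\in B_nv_S\subseteq W$. Hence $B_nv_{\bold k}=Fv_{\bold k}\subseteq W$.

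Uniqueness of the irreducible submodule then follows immediately: if $W$ is an irreducible submodule of $V_k$, then $W$ is nonzero, so by the previous step the nonzero submodule $B_nv_{\bold k}$ is contained in $W$; irreducibility of $W$ forces $W=B_nv_{\bold k}$. Combining the three steps gives the full statement.

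There is no serious obstacle here; the argument is essentially bookkeeping on the partial order $\le$. The one place that deserves a line of care is the passage from an arbitrary nonzero submodule to a single basis vector $v_S$ lying inside it, which is exactly what Proposition \ref{cyclic}(1) and (3) supply. Once that reduction is available, everything rests on the single observation that $\bold k$ lies below every $k$-subset, so the minimal $k$-subset's basis vector is unavoidably present in every nonzero submodule of $V_k$.
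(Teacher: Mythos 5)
Your proof is correct and follows essentially the same route as the paper: identify $\bold k$ as the minimum $k$-subset, use Lemma \ref{mod1}(1) to see $B_nv_{\bold k}=Fv_{\bold k}$, and use Proposition \ref{cyclic}(1),(3) to pull a basis vector $v_S$ out of any nonzero submodule and map it down to $v_{\bold k}$. Your ordering is in fact slightly cleaner than the paper's, since you deduce uniqueness of the irreducible submodule directly from the containment statement rather than giving it a separate argument.
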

\begin{proof} Since ${\bold k}$ is the smallest element of the set of all $k$-subsets and the elements of $B_n$ are order decreasing as well as order preserving injective maps, from action (\ref{moddef}) we find $B_nv_{{\bold k}}=Fv_{\bold k}$ is an irreducible submodule of $V_k$, and it is $1$-dimensional.
If $W$ is another nonzero irreducible submodule of $V_k$, by Proposition \ref{cyclic} (3) there exists a generator $w\in V_k$ such that $W = B_nw$. The irreducibility of $W$ forces that supp($w$) contains only the $k$-subset ${\bold k}$, since if supp($w$) contains another $k$-subset $S$ different from ${\bold k}$, then by Lemma \ref{mod1} (1), $v_S\in W\setminus B_nv_{\bold k}$ and hence $B_nv_{\bold k}$ would be a nonzero proper submodule of $W$.  We conclude $W=B_nv_{{\bold k}}$.

Now let $W$ be any nonzero submodule of $V_k$. From Proposition \ref{cyclic} (3) we know that $W$ is generated by a nonzero element $v\in V_k$. Pick any $S\in$ supp($v$). Then $v_S\in W$ by Proposition \ref{cyclic} (1). Since there exists a unique map $f\in B_n$ such that $D(f)=S$ and $R(f)={\bold k}$, we find $v_{\bold k}=f\cdot v_S\in B_nv_S\subseteq W$. Therefore $B_nv_{\bold k}\subseteq W$.
\end{proof}

The next result describes irreducible submodules of $V$ in terms of those of $V_k$.
\begin{proposition}\label{irreV}
If $W$ is an irreducible submodule of $V$, then $W=B_nv_{{\bold k}}$ for some $0\le k\le n$ and $\dim W =1.$ Moreover, $\{B_nv_{{\bold k}} \mid k = 0,\,\ldots,\, n\}$ is a complete set of irreducible submodules of $V$.
\end{proposition}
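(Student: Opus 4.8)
The plan is to reduce the statement about irreducible submodules of $V$ to the already-established classification for the $V_k$'s, using the direct-sum decomposition $V=\bigoplus_{k=0}^n V_k$ together with the cyclicity results from Proposition~\ref{cyclic}. First I would take an arbitrary nonzero irreducible submodule $W$ of $V$. By Proposition~\ref{cyclic}~(3), $W=B_nw$ for some nonzero $w\in V$, and by Proposition~\ref{cyclic}~(1) we have $v_S\in W$ for every $S\in{\rm supp}(w)$. The key first step is to show that ${\rm supp}(w)$ cannot meet two distinct ``levels'': if $S,S'\in{\rm supp}(w)$ with $|S|=k\ne l=|S'|$, then $v_S$ and $v_{S'}$ both lie in $W$, and since $B_nv_S\subseteq V_k$ and $B_nv_{S'}\subseteq V_l$ are nonzero submodules with trivial intersection (because $V_k\cap V_l=\{0\}$), neither can be all of $W$; this contradicts irreducibility. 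Hence ${\rm supp}(w)\subseteq\{S\mid |S|=k\}$ for a single $k$, so $W\subseteq V_k$.

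Once $W\subseteq V_k$, $W$ is a nonzero submodule of $V_k$, and Proposition~\ref{irreVk} applies directly: the only irreducible submodule of $V_k$ is the $1$-dimensional module $B_nv_{\bold k}$, so $W=B_nv_{\bold k}$ and $\dim W=1$. This gives the first assertion. For the ``moreover'' clause I would verify the two directions: each $B_nv_{\bold k}$ is indeed an irreducible submodule of $V$ (this is part of Proposition~\ref{irreVk}, since an irreducible submodule of $V_k\subseteq V$ is certainly an irreducible submodule of $V$), and conversely every irreducible submodule of $V$ equals some $B_nv_{\bold k}$ by the argument just given. Finally, these modules are pairwise distinct for distinct $k$ because $B_nv_{\bold k}=Fv_{\bold k}\subseteq V_k$ and the $V_k$ are independent, so $\{B_nv_{\bold k}\mid k=0,\ldots,n\}$ is a complete and irredundant list.

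The only step requiring any care is the first one — ruling out mixed supports — and the subtlety is simply that one must produce, inside $W$, two nonzero proper submodules rather than just two nonzero vectors; invoking Proposition~\ref{cyclic}~(1) to get $v_S,v_{S'}\in W$ and then passing to $B_nv_S$ and $B_nv_{S'}$ (each nonzero, each properly contained in $W$ since they sit in different $V_k$'s whose intersection is zero) is exactly what makes the irreducibility bite. Everything else is a routine appeal to Proposition~\ref{irreVk} and the directness of the decomposition $V=\bigoplus_{k=0}^n V_k$. I do not anticipate any genuine obstacle; the proof should be short.
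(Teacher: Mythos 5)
Your proof is correct and follows essentially the same route as the paper: reduce to showing $W$ lies in a single $V_k$ and then invoke Proposition~\ref{irreVk}. The only (immaterial) difference is that you confine ${\rm supp}(w)$ to one level using the directness of $V=\bigoplus_k V_k$, whereas the paper works with the reduced generator and uses incomparability of distinct maximal elements of its support to conclude the support is a singleton; both arguments are valid and land in the same place.
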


\begin{proof}
 From Proposition \ref{cyclic} (3) we find $W = B_nw$ for a reduced generator $w\in V$. Since $W$ is nonzero, supp($w$) is not empty. Assume that $S,\,T\in$ supp($w$) and $S\ne T$. Since $S,\,T$ are different maximal elements in supp($w$), from Lemma \ref{mod1} (i) we find $v_T\in W\setminus B_nv_S$, and hence $B_nv_S$ is a nonzero proper submodule of $W$, which contradicts the irreducibility of $W$. Therefore, supp($w$) contains only one subset of $\n$, showing that $W$ is a submodule of $V_k$ for some $0\le k\le n$. It follows from  Proposition \ref{irreVk} that $W=B_nv_{{\bold k}}$ and $\dim W =1$. The second part of the proposition is now straightforward.
\end{proof}

 Recall that a $B_n$-module is {\em indecomposable} if it is nonzero and cannot be written as a direct sum of two nonzero submodules, and that a $B_n$-module is called {\em completely decomposable} if it is nonzero and is a direct sum of indecomposable submodules.
\begin{proposition}\label{indDecom}
Let $W$ be a nonzero submodule of $V$. Then $W$ is indecomposable if and only if $W$ is a submodule of some $V_k$ where $0\le k\le n$.
\end{proposition}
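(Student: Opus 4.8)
The plan is to prove the two implications separately, leaning on the description of submodules of $V$ given in Proposition~\ref{cyclic} and on the fact, from Proposition~\ref{irreVk}, that for each $k$ the submodule $B_nv_{\bold k}$ is the unique irreducible submodule of $V_k$ and is contained in every nonzero submodule of $V_k$.

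First I would dispose of the easy direction: if $W\subseteq V_k$ for some $0\le k\le n$, then $W$ is indecomposable. Suppose instead $W=W_1\oplus W_2$ with $W_1$ and $W_2$ nonzero submodules. Both $W_1$ and $W_2$ are then nonzero submodules of $V_k$, so by Proposition~\ref{irreVk} each of them contains $B_nv_{\bold k}$, which is $1$-dimensional and hence nonzero. This gives $0\ne B_nv_{\bold k}\subseteq W_1\cap W_2$, contradicting the directness of the sum $W_1\oplus W_2$. Hence no such decomposition exists and $W$ is indecomposable.

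For the converse I would argue by contrapositive, assuming $W$ lies in no single $V_k$ and producing a decomposition. By Proposition~\ref{cyclic}(3) we may write $W=B_nw$ for a reduced generator $w$, and then Proposition~\ref{cyclic}(2) gives $W=\bigoplus_{T\in\mathcal{P}(w)}Fv_T$; in particular $W$ has a basis made of basis vectors $v_T$. Partition $\mathcal{P}(w)$ by cardinality, $\mathcal{P}(w)=\bigsqcup_{k=0}^n\mathcal{P}_k$ with $\mathcal{P}_k=\{T\in\mathcal{P}(w)\mid |T|=k\}$. Since the $v_S$ form a basis of $V$ and $V_k=\mathrm{span}\{v_S\mid |S|=k\}$, an element $\sum_{T\in\mathcal{P}(w)}c_Tv_T$ of $W$ lies in $V_k$ precisely when $c_T=0$ for all $T$ with $|T|\ne k$; hence $W\cap V_k=\bigoplus_{T\in\mathcal{P}_k}Fv_T$ and therefore $W=\bigoplus_{k=0}^n(W\cap V_k)$, a direct sum of submodules, each $W\cap V_k$ being an intersection of the submodules $W$ and $V_k$. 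Because $W$ is not contained in any single $V_k$, at least two of the summands $W\cap V_k$ are nonzero; choosing one nonzero summand $W\cap V_{k_0}$ and writing $W=(W\cap V_{k_0})\oplus\bigl(\bigoplus_{k\ne k_0}(W\cap V_k)\bigr)$ exhibits $W$ as a direct sum of two nonzero submodules, so $W$ is decomposable.

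The whole argument is bookkeeping built on Propositions~\ref{cyclic} and~\ref{irreVk}; the only step carrying real content is the observation that an arbitrary submodule $W$ of $V$ is graded, that is $W=\bigoplus_{k=0}^n(W\cap V_k)$, which I expect to be the main obstacle. This rests on the non-obvious fact---already established inside the proof of Proposition~\ref{cyclic}(3)---that $W$ admits a basis consisting of the monomials $v_S$; without it one could not conclude that intersecting with the grading pieces $V_k$ recovers all of $W$. Once that structural input is available, the remaining points are routine: each $W\cap V_k$ is automatically a $B_n$-submodule, and the grouping used in the converse direction produces two genuinely nonzero pieces.
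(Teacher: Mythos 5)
Your proposal is correct and follows essentially the same route as the paper: the easy direction via the common irreducible submodule $B_nv_{\bold k}$ from Proposition~\ref{irreVk}, and the converse via the decomposition of $W$ into its cardinality-graded pieces, which in the paper appear as the modules $B_nv_{i_j}$ obtained by partitioning the reduced support by cardinality and which coincide with your summands $W\cap V_{i_j}$. The only cosmetic difference is that you argue the converse by contrapositive and identify the pieces directly as intersections $W\cap V_k$, whereas the paper phrases them through the reduced generator; the underlying decomposition is identical.
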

\begin{proof}
    If $W$ is a nonzero submodule of $V_k$ where $0\le k\le n$, then by Proposition \ref{irreVk} any two nonzero submodules of $W$ both contain $B_nv_{{\bold k}}$, so their sum cannot be direct, and hence $W$ is indecomposable.

    Conversely, if $W$ is a nonzero indecomposable submodule of $V$, from Proposition \ref{cyclic} (3) it follows that $W=B_nv$ for a unique reduced generator $v = \sum_{S\in {\rm Red}(v)}v_S\in V$. Let $\mathcal{P}(i)$ be the set of all $i$-subsets of $\n$ where $0\le i\le n$. For each $i$ let Red$_i(v) = {\rm Red(}v{\rm)}\cap \mathcal{P}(i)$. Forgetting all the possible empty Red$_i(v)$, we obtain a partition of
    \[
        {\rm Red}(v) = {\rm Red}_{i_1}(v) \sqcup \cdots \sqcup {\rm Red}_{i_s}(v), \quad\text{for some } 1\le s\le n+1,
    \]
    where $0\le i_1 < \cdots < i_s \le n.$ Let $v_{i_j} = \sum_{S\in {\rm Red}_{i_j}(v)} v_S$ be the reduced vector with support Red$_{i_j}(v)$ where $1\le j\le s$. Then
    \begin{equation}\label{comDom}
      W = \bigoplus_{j=1}^{s} B_nv_{i_j}~,  \quad\text{direst sum of submodules}.
    \end{equation}
    Since $W$ is indecomposable and each $B_nv_{i_j}$ is a nonzero proper submodule of $W$, there exists some $k=i_j$ such that $W = B_nv_k$, which is a submodule of $V_k$.
\end{proof}

\begin{corollary}
    Every nonzero submodule of $V$ is completely decomposable. In particular, $V$ is completely decomposable and $V=\bigoplus_{k=0}^{n}V_k$ is a direct sum of indecomposable submodules.
\end{corollary}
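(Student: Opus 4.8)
The plan is to derive the corollary as a direct consequence of Proposition \ref{indDecom} together with Proposition \ref{cyclic} (3). The argument is essentially bookkeeping: decompose an arbitrary nonzero submodule according to subset-size, and check that the pieces produced are indecomposable.

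First I would take a nonzero submodule $W$ of $V$. By Proposition \ref{cyclic} (3), $W = B_n v$ for its unique reduced generator $v = \sum_{S\in {\rm Red}(v)} v_S$. As in the proof of Proposition \ref{indDecom}, I would split ${\rm Red}(v)$ according to the cardinalities of its elements, writing ${\rm Red}(v) = {\rm Red}_{i_1}(v)\sqcup\cdots\sqcup{\rm Red}_{i_s}(v)$ with $0\le i_1<\cdots<i_s\le n$ and ${\rm Red}_{i_j}(v) = {\rm Red}(v)\cap\mathcal{P}(i_j)$ nonempty, and setting $v_{i_j} = \sum_{S\in{\rm Red}_{i_j}(v)} v_S$. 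Then, exactly as in (\ref{comDom}), $W = \bigoplus_{j=1}^s B_n v_{i_j}$ as a direct sum of submodules; the directness is clear because by Proposition \ref{cyclic} (2) each $B_n v_{i_j}$ is spanned by basis vectors $v_T$ with $|T|\le i_j$ arising from subsets dominated by members of ${\rm Red}_{i_j}(v)$, and I would note more carefully that the top-cardinality $v_S$'s with $|S|=i_j$ lie in $B_nv_{i_j}$ but in no other summand, so the summands intersect trivially. Each $B_n v_{i_j}$ is a nonzero submodule of $V_{i_j}$, hence indecomposable by Proposition \ref{indDecom}. This exhibits $W$ as a direct sum of indecomposable submodules, so $W$ is completely decomposable.

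For the "in particular" clause, I would apply this to $W = V$ itself: its reduced generator is $\sum_{k=0}^n v_{\{n-k+1,\ldots,n\}}$ (as recorded just after Corollary \ref{eq}), whose reduced support has one element of each cardinality $0,1,\ldots,n$, so the decomposition above recovers precisely $V = \bigoplus_{k=0}^n V_k$, and each $V_k$ is indecomposable by Proposition \ref{indDecom}. Alternatively one can simply observe $V=\bigoplus_{k=0}^n V_k$ directly and invoke Proposition \ref{indDecom} on each $V_k$.

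I do not anticipate a real obstacle here; the only point requiring a little care is justifying that the decomposition $W = \bigoplus_j B_n v_{i_j}$ is genuinely direct rather than merely a sum, and this is already handled in the proof of Proposition \ref{indDecom}, so I would either cite that line or reproduce the one-sentence reason (distinct cardinality strata of the basis $\mathcal{B}$ are linearly independent). Everything else is immediate from the cited results.
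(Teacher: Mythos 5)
Your proposal is correct and follows essentially the same route as the paper: invoke Proposition \ref{cyclic} (3) to write $W=B_nv$ for its reduced generator, stratify ${\rm Red}(v)$ by cardinality to obtain the decomposition (\ref{comDom}), and apply Proposition \ref{indDecom} to conclude each summand is indecomposable. Your extra sentence justifying directness of the sum (the summands are spanned by disjoint subsets of the basis $\mathcal{B}$, separated by the top-cardinality vectors) is a welcome clarification of a point the paper leaves implicit.
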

\begin{proof}
     Let $W$ be a nonzero submodule of $V$. Then  from Proposition \ref{cyclic} (3) we have $W=B_nv$ for a unique reduced generator $v = \sum_{S\in {\rm Red}(v)}v_S\in V$. A similar argument to that of Proposition \ref{indDecom} shows that $W$ has the decomposition (\ref{comDom}). From Proposition \ref{indDecom} each $B_nv_{i_j}$ in (\ref{comDom}) is indecomposable. The first part of the desired result follows. The second part follows immediately.
\end{proof}

\begin{proposition}
No two different submodules of $V$ are isomorphic.
\end{proposition}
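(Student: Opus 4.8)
The plan is to show that an abstract $B_n$-module isomorphism between two submodules forces them to have the same basis, by extracting enough numerical invariants from the isomorphism. The crucial tool is the family of idempotents of $B_n$: for every $A\subseteq\n$ the partial identity $e_A$ with $D(e_A)=R(e_A)=A$ and $e_A(a)=a$ for $a\in A$ lies in $B_n$, because the identity map is both order preserving and order decreasing. By the action (\ref{moddef}) one has $e_A\cdot v_T=v_T$ if $T\subseteq A$ and $e_A\cdot v_T=0$ otherwise. Hence, writing any submodule as $W=\bigoplus_{T\in\mathcal P}Fv_T$ (possible for every submodule by Proposition \ref{cyclic}), the operator $e_A$ acts on $W$ as the projection onto $\mathrm{span}\{v_T\mid T\in\mathcal P,\ T\subseteq A\}$, so that $\dim(e_A\cdot W)=|\{T\in\mathcal P\mid T\subseteq A\}|$. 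Since this integer is an isomorphism invariant of $W$ (conjugate operators have equal-dimensional images), it is a complete isomorphism invariant as $A$ ranges over all subsets of $\n$ — this is the point that drives the proof.

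Concretely I would argue as follows. Suppose $\phi\colon W_1\to W_2$ is a $B_n$-module isomorphism. By Proposition \ref{cyclic}(3) write $W_i=B_nv_i$, so $W_i=\bigoplus_{T\in\mathcal P_i}Fv_T$ with $\mathcal P_i=\mathcal P(v_i)$ as in Proposition \ref{cyclic}(2). Since $\phi$ is bijective and commutes with the $B_n$-action, it restricts to a linear isomorphism $e_A\cdot W_1\to e_A\cdot W_2$ for every $A\subseteq\n$; comparing dimensions gives
\[
    \bigl|\{T\in\mathcal P_1\mid T\subseteq A\}\bigr|=\bigl|\{T\in\mathcal P_2\mid T\subseteq A\}\bigr|\qquad\text{for all }A\subseteq\n .
\]
Then I would apply M\"obius inversion (the inclusion-exclusion principle) on the Boolean lattice of subsets of $\n$: letting $\chi_i$ be the indicator function of $\mathcal P_i$, the displayed equalities read $\sum_{T\subseteq A}\chi_1(T)=\sum_{T\subseteq A}\chi_2(T)$ for all $A$, whence $\chi_1(U)=\sum_{A\subseteq U}(-1)^{|U|-|A|}\sum_{T\subseteq A}\chi_1(T)$ coincides with the analogous expression for $i=2$, so $\chi_1=\chi_2$ and $\mathcal P_1=\mathcal P_2$. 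Therefore $W_1=\bigoplus_{T\in\mathcal P_1}Fv_T=\bigoplus_{T\in\mathcal P_2}Fv_T=W_2$, which is the claim. (The case where one of the $W_i$ is zero is trivial and can be set aside at the start.)

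The only genuinely new step is recognizing the rank family $A\mapsto\dim(e_A\cdot W)$ as a complete isomorphism invariant; after that the argument is mechanical. I expect the minor obstacles to be easy: verifying that $e_A$ acts as the stated projection (immediate from (\ref{moddef}) together with $e_A(T)=T$ for $T\subseteq A$), and checking that the inclusion-exclusion step is legitimate, which it is since $A\mapsto|\{T\in\mathcal P_i\mid T\subseteq A\}|$ is precisely the sum of $\chi_i(T)$ over all $T\subseteq A$ in the finite Boolean lattice.
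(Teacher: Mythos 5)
Your proof is correct, but it takes a genuinely different route from the paper's. Both arguments hinge on the partial identities $e_A$ with $D(e_A)=R(e_A)=A$, but they exploit them differently. The paper applies $e_S$ and then $e_{T^\p}$ for $T^\p\subsetneq S$ to the image $\sigma(v_S)$ of each individual basis vector $v_S$ of $W$, deducing by a short contradiction argument that $\sigma(v_S)=\lambda_S v_S$ with $\lambda_S\neq 0$ --- that is, any isomorphism between submodules is diagonal in the standard basis $\mathcal B$ --- and then concludes $U=\sigma(W)=W$ summand by summand. You instead package the action of all the $e_A$ at once into the single numerical invariant $A\mapsto\dim(e_A\cdot W)=\bigl|\{T\in\mathcal P\mid T\subseteq A\}\bigr|$ and recover the index set $\mathcal P$ by M\"obius inversion on the Boolean lattice of subsets of $\mathbf{n}$. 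Each step you list does check out: $e_A\in B_n$ since the partial identity is order preserving and order decreasing; $e_A$ acts as the stated projection by (\ref{moddef}); every submodule is spanned by a subset of $\mathcal B$ by Proposition \ref{cyclic}; intertwined operators have images of equal dimension; and inversion on a finite Boolean lattice is legitimate. Your version buys a cleaner, more mechanical argument that sidesteps the paper's coefficient-by-coefficient analysis and exhibits the isomorphism class as a purely combinatorial invariant; the paper's version yields the stronger structural byproduct that every module isomorphism between submodules of $V$ acts as a nonzero scalar on each $v_S$.
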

\begin{proof}
Let $W$ and $U$ be two submodules of $V$ and $\sigma:W\rightarrow U$ be a module isomorphism. Let $x=\sum_{S\in\, \mathcal I} v_S$ be a reduced generator of $W$, where $\mathcal I$ is an index set. Now for $v_S,S\in \mathcal I$, suppose $\sigma(v_S)=\sum_{T\in\, \mathcal J} \lambda_T v_T$ for some index set $\mathcal J$, where $\lambda_T\in F$. Take $f_S\in B_n$ with $D(f)=R(f)=S$. Then
\[
    \sigma(v_S)=\sigma(f_S\cdot v_S)=f_S\cdot\sigma(v_S)=f_S\cdot\sum_{T\in \mathcal J} \lambda_T
v_T=\sum_{T\in \mathcal J,\,T\subseteq S} \lambda_T (f_S\cdot v_T)=\sum_{T\in
\mathcal J,\,T\subseteq S} \lambda_T v_T.
\]
We show that $\sigma(v_S)=\lambda_Sv_S$ with $\lambda_S\neq 0$. If in the sum on the right there is some $T^\p\in \mathcal J,\,T^\p\subsetneq S$ with $\lambda_{T^\p}\neq 0$, let $f_{T^\p}\in B_n$ with $D(f_{T^\p})=R(f_{T^\p})=T^\p$. We have
\[
    f_{T^\p}\cdot\sigma(v_S) = f_{T^\p}\cdot\sum_{T\in \mathcal J,\,T\subseteq S}\lambda_T v_T=\sum_{T\in \mathcal J,\,T\subseteq S}\lambda_T (f_{T^\p}\cdot v_T)=\sum_{T\in \mathcal J,\,T\subseteq T^\p}\lambda_T v_T\ne 0,
\]
but $f_{T^\p}\cdot\sigma(v_S) = \sigma(f_{T^\p}\cdot v_S) =\sigma(0)=0$, a contradiction. Thus we get $\sigma(v_S)=\lambda_Sv_S$ and $\lambda_S\neq 0$, so $B_nv_S=B_n\sigma(v_S)$. By Proposition 4.3 (1), we find
\[
W=B_nx=\sum_{S\in \mathcal I} B_nv_S=\sum_{S\in \mathcal I}B_n\sigma(v_S)=\sigma\Big(\sum_{S\in \mathcal I}B_nv_S\Big)=\sigma\Big(B_n\big(\sum_{S\in \mathcal I}v_S\big)\Big)=\sigma(B_nx)=U.
\] \end{proof}

We now describe the dimension of any nonzero submodule of $V$. Proposition \ref{cyclic} (3) assures that the submodule is equal to the module $B_nv$ generated by some $v\in V$.
\begin{proposition}
Let $v\in V$ and {\rm Red(}v{\rm )}= $\{S_1,\,\ldots,\,S_m\}$. For any $J\subseteq$ {\rm Red(}v{\rm )} denote by $S_J$ the greatest lower bound of $\{S_j\mid j\in J\}$. Then the dimension of $B_nv$ is given by
\[
    \dim B_nv = \sum_{\emptyset\,\ne J\,\subseteq {\bold m}} (-1)^{|J|-1} \dim B_nv_{S_J}.
\]
\end{proposition}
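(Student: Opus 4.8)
The plan is to convert the statement into a set-counting identity and then apply the inclusion–exclusion principle. First I would use Corollary~\ref{eq} to replace $v$ by its reduced form, so that without loss of generality ${\rm supp}(v) = {\rm Red}(v) = \{S_1,\dots,S_m\}$. By Proposition~\ref{cyclic}~(2) we then have $B_nv = \bigoplus_{T\in\mathcal P(v)}Fv_T$ with $\mathcal P(v) = \bigcup_{i=1}^m A_i$, where $A_i := \{T\subseteq\n\mid T\le S_i\}$; hence $\dim B_nv = |\mathcal P(v)| = \big|\bigcup_{i=1}^m A_i\big|$. Moreover Lemma~\ref{mod1}~(1) gives $|A_i| = \dim B_nv_{S_i}$ and, more generally, $\dim B_nv_{S_J} = |\{T\subseteq\n\mid T\le S_J\}|$ whenever the greatest lower bound $S_J$ exists.

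Next I would apply inclusion–exclusion, $\big|\bigcup_{i=1}^m A_i\big| = \sum_{\emptyset\ne J\subseteq\mathbf{m}}(-1)^{|J|-1}\big|\bigcap_{j\in J}A_j\big|$, and identify each intersection. The claim is that $\bigcap_{j\in J}A_j = \{T\subseteq\n\mid T\le S_J\}$, which by the previous paragraph has size $\dim B_nv_{S_J}$. Indeed $T\in\bigcap_{j\in J}A_j$ means $T\le S_j$ for all $j\in J$, i.e. $T$ is a common lower bound of $\{S_j\mid j\in J\}$; by the definition of greatest lower bound this holds if and only if $T\le S_J$ (the backward direction using transitivity of $\le$, since $S_J$ is itself a lower bound). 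Substituting $\big|\bigcap_{j\in J}A_j\big| = \dim B_nv_{S_J}$ into the expansion gives the asserted formula.

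The step I expect to require the most care is the meaning of $S_J$ when $J$ indexes sets $S_j$ of different cardinalities: since $k$-subsets and $l$-subsets are incomparable for $k\ne l$, the family $\{S_j\mid j\in J\}$ then has no lower bound and $S_J$ does not literally exist. This is harmless, but it must be addressed. The cleanest route is to split ${\rm Red}(v) = {\rm Red}_{i_1}(v)\sqcup\cdots\sqcup{\rm Red}_{i_s}(v)$ by cardinality exactly as in the proof of Proposition~\ref{indDecom}, so that $\dim B_nv = \sum_j \dim B_nv_{i_j}$, run the argument above inside each fixed-cardinality class — where the $k$-subsets under $\le$ genuinely form a lattice with $S\wedge T = \{\min(s_i,t_i)\}$, as already used in Lemma~\ref{mod1}~(3) — and then observe that every "mixed" $J$ contributes $0$ on both sides: if $|S_j|\ne|S_{j'}|$ then $A_j$ consists of $|S_j|$-subsets and $A_{j'}$ of $|S_{j'}|$-subsets, so $\bigcap_{j\in J}A_j = \emptyset$, while on the right one sets $B_nv_{S_J} = 0$. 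Hence the single combined sum over all nonempty $J\subseteq\mathbf{m}$ reproduces the statement as written.
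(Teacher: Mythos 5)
Your proposal is correct and follows essentially the same route as the paper: express $\dim B_nv$ as $|\bigcup_j A_j|$ with $A_j=\{T\subseteq\n\mid T\le S_j\}$ via Proposition~\ref{cyclic}~(2), then apply inclusion--exclusion, using Lemma~\ref{mod1}~(3) to identify each intersection $\bigcap_{j\in J}A_j$ with $\{T\mid T\le S_J\}$ of cardinality $\dim B_nv_{S_J}$. Your extra step of splitting ${\rm Red}(v)$ by cardinality and setting $B_nv_{S_J}=0$ for mixed $J$ (where the greatest lower bound does not exist, since subsets of different sizes are incomparable) is a point the paper leaves implicit, and your treatment of it is the right one.
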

\begin{proof}
  From Proposition \ref{cyclic} (2) and (3) the dimension of $B_nv$ is equal to the cardinality of the set
  $\mathcal{P}(v) = \bigcup_{S\in {\rm Red(}v{\rm )}}\{T\subseteq\n\mid T\le S\}$. Let $A_j=\{T\subseteq\n\mid T\le S_j\}$, $j\in{\bold m}$. Then $\mathcal{P}(v) = \bigcup_{j\in{{\bold m}}}A_j$, and $\dim B_nv_{S_j} = |A_j|$ by Proposition \ref{cyclic} (2). With Lemma \ref{mod1} (3) in mind and applying the inclusion-exclusion principle to count the cardinality of $\mathcal{P}(v)$, we obtain the desired formula for $\dim B_nv$.
\end{proof}

We further describe the dimension of $B_nv_S$ for any $S\subseteq\n$. In what follows we agree that if $x>y$, then $\binom{y}{x}=0$ and the empty sum $\sum_{i=x}^y\square_i = 0$.
\begin{theorem}\label{dim}
If $S=\{s_1<\dots<s_k\}$ is a $k$-subset of $\n$, let $d_k$ be the dimension of the module $B_nv_S$. We have $d_1 = s_1$, and for $k\ge 2$,
\begin{align}
   d_k &= \sum^{k-1}_{i=1}{s_{k-i+1} \choose k+1-i}\gamma_i -\sum^{k-1}_{i=1}{s_{k-i+1}-s_1 \choose k+1-i}\gamma_i-\sum^{k-2}_{i=1}s_1 {s_{k-i+1}-s_2 \choose k-i}\gamma_i ~, \label{dk}
\end{align}
where $\gamma_1=1$ and for $2\leq j\leq k-1$,
\begin{equation*}\label{}
    \gamma_j=-\sum^{j-2}_{i=1}
              {s_{k+1-i}-s_{k+2-j} \choose j-i}\gamma_i ~.
    \end{equation*}
\end{theorem}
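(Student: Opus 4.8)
The plan is to reduce the statement to a counting problem, establish a recursion in $k$, and then verify the closed form (\ref{dk}) by induction, the induction being driven by a single hockey-stick identity.

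By Lemma~\ref{mod1}(1), $B_nv_S=\bigoplus_{S'\le S}Fv_{S'}$, so $d_k$ is the number of $k$-subsets $S'=\{s_1'<\dots<s_k'\}$ of $\n$ with $s_i'\le s_i$ for all $i$. This count depends only on $s_1,\dots,s_k$, not on $n$, so I denote it $d_k(s_1,\dots,s_k)$. Conditioning on the value $a\in\{1,\dots,s_1\}$ of $\min S'=s_1'$, the map $(s_2',\dots,s_k')\mapsto(s_2'-a,\dots,s_k'-a)$ is a bijection from the $k$-subsets $S'\le S$ with $\min S'=a$ onto the $(k-1)$-subsets below $(s_2-a,\dots,s_k-a)$; here one uses $s_j-a\ge s_j-s_1\ge 1$ for $j\ge 2$, so the shifted tuple is again a strictly increasing tuple of positive integers. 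Hence
\[
    d_k(s_1,\dots,s_k)=\sum_{a=1}^{s_1}d_{k-1}(s_2-a,\dots,s_k-a),
\]
and together with $d_1(s)=s$ this recursion determines $d_k$.

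Now I would prove (\ref{dk}) by induction on $k$; the base case $k=1$ is the stated value $d_1=s_1$. For the step, substitute into the recursion the known expression for $d_{k-1}$ evaluated at $\sigma=(\sigma_1,\dots,\sigma_{k-1})$, $\sigma_j=s_{j+1}-a$ --- namely $d_1(s_2-a)=s_2-a$ if $k=2$, and formula~(\ref{dk}) if $k\ge 3$. Two observations make the two sides match. First, every difference $\sigma_p-\sigma_q$ equals $s_{p+1}-s_{q+1}$ and so is independent of $a$; since the $\gamma$-recursion in the statement involves only such differences, the numbers $\gamma_1,\dots,\gamma_{k-2}$ of the tuple $\sigma$ coincide with $\gamma_1,\dots,\gamma_{k-2}$ of $(s_1,\dots,s_k)$ (a short induction on the index $j$). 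Second, after the substitution the three sums of the $d_{k-1}$-formula break, under $\sum_{a=1}^{s_1}$, into: a part depending on $a$ only through the quantities $s_{k-i+1}-a$, which sum by
\[
    \sum_{a=1}^{s_1}\binom{x-a}{m}=\binom{x}{m+1}-\binom{x-s_1}{m+1}
\]
to give the $1\le i\le k-2$ terms of the first two sums of (\ref{dk}); a part independent of $a$, which merely acquires a factor $s_1$ and reproduces the third sum of (\ref{dk}); and the leftover summand, of the shape $-(s_2-a)\sum_{i=1}^{k-3}\binom{s_{k-i+1}-s_3}{k-1-i}\gamma_i$, which, after evaluating $\sum_{a=1}^{s_1}(s_2-a)=\binom{s_2}{2}-\binom{s_2-s_1}{2}$ by the same identity with $m=1$, becomes exactly the $i=k-1$ terms of the first two sums of (\ref{dk}) once one puts $\gamma_{k-1}=-\sum_{i=1}^{k-3}\binom{s_{k+1-i}-s_3}{k-1-i}\gamma_i$. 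The final point is that this forced value of $\gamma_{k-1}$ is precisely the $j=k-1$ instance of the $\gamma$-recursion in the statement (there $s_{k+2-j}=s_3$ and $j-i=k-1-i$), which closes the induction.

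The mathematical input is light --- one hockey-stick identity and the shift-invariance of the $\gamma_j$. The real work, and the main obstacle, is the bookkeeping: handling the reindexing $i\leftrightarrow k-i$ and the sum ranges correctly, proving the shift-invariance cleanly enough that the inductive hypothesis applies verbatim, and checking that the leftover term of the $(k-1)$-formula reassembles exactly into the $j=k-1$ terms of the $k$-formula, so that the recursion for $\gamma_{k-1}$ is dictated, not guessed. One should also keep the conventions $\binom{y}{x}=0$ for $x>y$ and ``empty sums vanish'' in view throughout, since they absorb the small cases $k=2,3$.
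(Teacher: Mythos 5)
Your proof is correct, and it reaches the formula by a genuinely different organization of what is at bottom the same computation. Both arguments start from Lemma \ref{mod1}(1), so that $d_k$ is the number of $k$-subsets $T\le S$, and both ultimately evaluate the same nested sum using the same identity $\sum_{z=a}^{a+b-1}\binom{z}{p}=\binom{a+b}{p+1}-\binom{a}{p+1}$. The paper, however, fixes $S$, passes to partition coordinates $\la_i=s_{k-i+1}-(k-i+1)$, and runs an \emph{internal} induction on an auxiliary index $j$: it introduces $\alpha_j(\mu)$, the number of chains $\mu_1\ge\cdots\ge\mu_{j-1}\ge\mu$ with $\mu_i\le\la_i$, shows $\alpha_j(\mu)=\beta_j+\gamma_j$ with only $\beta_j$ depending on the floating threshold $\mu$, and sums over $\mu$ at the end. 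You instead peel off the smallest element of $T$ to obtain a recursion between instances of the theorem itself, $d_k(s_1,\dots,s_k)=\sum_{a=1}^{s_1}d_{k-1}(s_2-a,\dots,s_k-a)$, and induct on $k$; the shift-invariance of the $\gamma_j$ (they depend only on the differences $s_p-s_q$) is exactly what lets the inductive hypothesis apply verbatim, and the leftover $-(s_2-a)\sum_i\binom{s_{k-i+1}-s_3}{k-1-i}\gamma_i$ term forces the $j=k-1$ instance of the $\gamma$-recursion, just as the paper's $\gamma_{l+1}$ is forced by the $-\binom{a}{p+1}$ tail of the same identity. I checked the recursion (one needs $s_{j+1}-a\ge j$, which holds because $a\le s_1\le s_{j+1}-j$), the reindexing of the three sums, and the cases $k=2,3$, which are indeed absorbed by the stated empty-sum and $\binom{y}{x}=0$ conventions; there is no gap. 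What your route buys is an induction on the statement rather than on an auxiliary two-parameter function, at the cost of having to verify the shift-invariance of the $\gamma_j$ separately.
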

\begin{proof}
By Lemma \ref{mod1} (i) we know that $d_k$ is equal to the number of $k$-subsets $T$ of $\n$ such that $T\leq S$. Let
$
    \la_i=s_{k-i+1}-(k-i+1)\text{ for } 1\le i \le k.
$
Then $\lambda_i\ge\lambda_{i+1}$ since $s_{k-i+1}>s_{k-i}$.
Because the smallest $k$-subset is $\{1,\dots,k\}$, we have
\begin{equation}\label{lambdaSequence}
    \la_1\geq \dots\geq\la_k\geq 0,
\end{equation}
and the number of $k$-subsets $T$ of $\n$ with $T\leq S$ is equal to the number of all the sequences
\begin{equation}\label{partition}
    \mu_1\geq \dots\geq \mu_k\geq 0\quad\text{with}\quad \mu_i\leq \lambda_i\quad\text{for}\quad i=1,\dots,k.
\end{equation}
To find $d_k$ it suffices to compute the number of the sequences in (\ref{partition}) for the given sequence (\ref{lambdaSequence}).
If $k=1$, then $d_1 = \lambda_1 + 1 = s_1$.

If $k\ge 2$, let $2\le j\le k$. For each fixed nonnegative integer $\mu\le\la_j$ we calculate iteratively on $j$ the number $\alpha_j(\mu)$ of the sequences
\begin{equation}\label{aSj}
  \mu_1\geq \dots\geq \mu_{j-1}\ge\mu\quad\text{with}\quad \mu_i\leq \lambda_i\quad\text{for}\quad i=1,\dots,j-1,
\end{equation}
and the required dimension $d_k = \sum_{\mu = 0}^{\lambda_k}\alpha_k(\mu)$.

Let $\xi_j=\la_j-\mu$. Then $0\leq \xi_j\leq\la_j$. Our aim now is to prove
\begin{equation}\label{alphaj}
  \alpha_j(\mu) = \beta_j + \gamma_j,
\end{equation}
where
$\beta_j=\sum^{j-1}_{i=1}{\la_i-\la_j+\xi_j+j-i \choose j-i}\gamma_i$ and $\gamma_j = -\sum^{j-2}_{i=1}{\la_i-\la_{j-1}+j-i-1 \choose j-i}\gamma_i$ with $\gamma_1=1$.
Notice that $\alpha_j(\mu)$ is a sum of two numbers $\beta_j$ and $\gamma_j$, of which $\gamma_j$ depends on $\la_1,\dots,\la_{j-1}$, whereas $\beta_j$ depends on $\la_1,\dots,\la_j$ and $\xi_j$.

We use induction on $j$ to prove (\ref{alphaj}) for $2 \le j\le k$. If $j=2$, for each fixed nonnegative integer $\mu\le\lambda_2$
we have $\xi_2=\la_2-\mu$ and $0\leq \xi_2\leq\la_2$. Let $\xi_1=\la_1-\mu_1$. To ensure that (\ref{aSj}) holds for this case, namely $\mu_1\geq\mu$ and $\mu_1\leq\lambda_1$, we must have $0\leq \xi_1\leq\la_1-\la_2+\xi_2$, and conversely. So
\begin{equation*}
    \alpha_2(\mu)= \la_1-\la_2+\xi_2+1 = \beta_2+\gamma_2
\end{equation*}
where $\beta_2=\la_1-\la_2+\xi_2+1$ and $\gamma_2=0$, and this is (\ref{alphaj}) for $j=2$.

Suppose (\ref{alphaj}) holds for $j=l$ with $2\leq l\leq k-1$, that is, for each fixed nonnegative integer $\mu\le\lambda_l$
we have $\xi_l=\la_l-\mu$ with $0\leq \xi_l\leq\la_l$, and the number of sequences $\mu_1\geq\dots\geq\mu_{l-1}\geq\mu$ with $\mu_i\leq\lambda_i$ for $i=1,\ldots,l-1$ is
\begin{equation}\label{hypothesis}
  \alpha_l(\mu) = \beta_l + \gamma_l,
\end{equation}
where
$\beta_l=\sum^{l-1}_{i=1}{\la_i-\la_l+\xi_l+l-i \choose l-i}\gamma_i$ and $\gamma_l = -\sum^{l-2}_{i=1}{\la_i-\la_{l-1}+l-i-1 \choose l-i}\gamma_i$.

We now prove (\ref{alphaj}) for $j=l+1$. For a fixed nonnegative integer $\nu\le\lambda_{l+1}$ we have $\xi_{l+1} = \la_{l+1}-\nu$ with $0\leq \xi_{l+1}\leq\la_{l+1}$. Let $\mu=\la_l-\xi_l$. To ensure that the condition (\ref{aSj})
\[
     \mu_1\geq\dots\geq\mu_{l-1}\geq\mu\ge\nu\quad\text{ with}\quad\mu_i\le\lambda_i,\, i=1,\ldots,{l-1}\text{ and } \mu\le\lambda_l
\]
holds here, we must have $0\leq \xi_l\leq\rho_l$ where $\rho_l=\la_l-\la_{l+1}+\xi_{l+1}$, and conversely. Adding all $\alpha_l(\mu)$ up for $\nu\le\mu\le\lambda_l$ and using the induction hypothesis (\ref{hypothesis}), we obtain
\begin{eqnarray}
\nonumber\alpha_{l+1}(\nu)&=&\sum_{\mu = \nu}^{\lambda_{l}} \alpha_l(\mu)  = \sum_{\mu = \nu}^{\lambda_{l}} (\beta_l + \gamma_l)\\
          &=&\sum^{\rho_l}_{\xi_l=0}\sum^{l-1}_{i=1}{\la_i-\la_l+\xi_l+l-i \choose l-i}\gamma_i+\sum^{\rho_l}_{\xi_l=0}\gamma_l \label{second}\\
\nonumber &=&\sum^{l-1}_{i=1}\left\{{\la_i-\la_{l+1}+\xi_{l+1}+(l+1)-i \choose l+1-i}\gamma_i
    -{\la_i-\la_l+l-i \choose l+1-i}\gamma_i\right\}\\
          &&\qquad\qquad\qquad\qquad\qquad+{\la_l-\la_{l+1}+\xi_{l+1}+1\choose 1}\gamma_l \label{alp1}\\
\nonumber &=&\sum^l_{i=1}{\la_i-\la_{l+1}+\xi_{l+1}+(l+1)-i \choose l+1-i}\gamma_i-\sum^{l-1}_{i=1}{\la_i-\la_l+l-i \choose l+1-i}\gamma_i\\
\nonumber &=&\beta_{l+1} + \gamma_{l+1},
\end{eqnarray}
where
\begin{eqnarray*}\label{}
\beta_{l+1}&=&\sum^l_{i=1}
    {\la_i-\la_{l+1}+\xi_{l+1}+(l+1)-i \choose l+1-i}\gamma_i ~,\\
\gamma_{l+1}&=&-\sum^{l-1}_{i=1}{\la_i-\la_l+l-i \choose
l+1-i}\gamma_i ~.
\end{eqnarray*}
Here we have made use of the identity $\sum_{z=a}^{a+b-1}\binom{z}{p} = \binom{a+b}{p+1} - \binom{a}{p+1}$ in which $a, b, p$ are natural numbers to obtain (\ref{alp1}) from (\ref{second}) by assigning $a=\lambda_i - \lambda_l + l - i,\, b=\lambda_l - \lambda_{l+1} + \xi_{l+1} + 1$ and $p=l-i\ge 1$.
Therefore, (\ref{alphaj}) is valid for $j=l+1$, and we complete the proof of (\ref{alphaj}) by induction.

We are now able to calculate the dimension $d_k$ of the module $B_nv_S$ for $k\ge 2$ by summing all $\alpha_k(\mu)$ in (\ref{alphaj}) up where $\mu$ runs from $0$ to $\la_k$, yielding
\begin{eqnarray*}\label{dimlambda3}
\nonumber d_k &=& \sum^{\la_k}_{\mu=0}\alpha_k(\mu) \\
\nonumber   &=&\sum^{\la_k}_{\xi_k=0}\sum^{k-1}_{i=1}
                {\la_i-\la_k+\xi_k+k-i \choose k-i}\gamma_i-\sum^{\la_k}_{\xi_k=0}\sum^{k-2}_{i=1}
                {\la_i-\la_{k-1}+k-i-1 \choose k-i}\gamma_i\\
\nonumber   &=&\sum^{k-1}_{i=1}{\la_i+k-i+1 \choose k+1-i}\gamma_i -\sum^{k-1}_{i=1}{\la_i-\la_k+k-i \choose k+1-i}\gamma_i \\
            &&\qquad\qquad -\sum^{k-2}_{i=1}(\la_k+1) {\la_i-\la_{k-1}+k-i-1 \choose k-i}\gamma_i~.
\end{eqnarray*}
From $\la_i=s_{k-i+1}-(k-i+1)$, we conclude that $d_k$ is given by (\ref{dk}).
\end{proof}

The following four corollaries are consequences of Theorem \ref{dim}. Considering the sequence $\la_1\geq \dots\geq\la_k\geq 0$ given in (\ref{lambdaSequence}) a partition of $d=\sum_{i=1}^k\la_i$ into at most $k$-parts, we obtain the next result.
\begin{corollary}
Let $\la_1\geq\dots\geq\la_k\geq 0$ be a given partition
of some nonnegative integer. Then the number of distinct Young diagrams obtained from the Young diagram of $\la$ by removing zero or more boxes from the rows is $d_k$, which is given in Theorem {\rm \ref{dim}}.
\end{corollary}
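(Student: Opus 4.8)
The plan is to recognize this corollary as a direct combinatorial translation of the counting argument already carried out inside the proof of Theorem~\ref{dim}. First I would make the dictionary precise. A Young diagram obtained from that of $\la=(\la_1\ge\cdots\ge\la_k\ge 0)$ by deleting zero or more boxes from the (right ends of the) rows has row-length vector $\mu=(\mu_1,\dots,\mu_k)$ with $0\le\mu_i\le\la_i$ for each $i$, and the resulting configuration is a genuine Young diagram exactly when $\mu_1\ge\mu_2\ge\cdots\ge\mu_k\ge 0$. Hence the quantity to be counted is precisely the number of weakly decreasing sequences $\mu_1\ge\cdots\ge\mu_k\ge 0$ subject to $\mu_i\le\la_i$; that is, the number of sequences of the form (\ref{partition}) associated with the sequence $\la$ as in (\ref{lambdaSequence}).

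Next I would realize $\la$ as the invariant of an actual $k$-subset so that Theorem~\ref{dim} applies. Since $\la$ has at most $k$ parts, choose any $n\ge\la_1+k$ and set $s_i=\la_{k-i+1}+i$ for $1\le i\le k$. One checks $s_1<\cdots<s_k$, because $s_{i+1}-s_i=\la_{k-i}-\la_{k-i+1}+1\ge 1$, so $S=\{s_1<\cdots<s_k\}$ is a $k$-subset of $\n$ with $\la_i=s_{k-i+1}-(k-i+1)$; thus $\la$ is exactly the sequence (\ref{lambdaSequence}) attached to $S$ in the proof of Theorem~\ref{dim}. By Lemma~\ref{mod1}(1) together with that proof, the number of $k$-subsets $T\subseteq\n$ with $T\le S$ — equivalently, the number of sequences (\ref{partition}) — equals $\dim B_nv_S=d_k$, with $d_k$ given by the formula of Theorem~\ref{dim}; in the $\la$-variables the equivalent expression is the one displayed near the end of that proof, and in the $s$-variables it is (\ref{dk}). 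Combining the two steps, the number of distinct Young diagrams obtained from $\la$ by removing boxes from rows equals $d_k$, which is the assertion.

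There is essentially no hard step here; the only thing demanding care is the bijective bookkeeping. One must phrase everything in terms of the resulting row vector $\mu$ so that distinct sequences of deletions producing the same diagram are counted once, and one must observe that the single constraint "the result is still a Young diagram'' is exactly the monotonicity $\mu_1\ge\cdots\ge\mu_k$. That is what makes the set of admissible $\mu$ coincide with the set (\ref{partition}) rather than with the larger box product $\prod_{i=1}^k\{0,1,\dots,\la_i\}$, and it is the point at which the correspondence with $k$-subsets $T\le S$ — and hence with $d_k$ — becomes exact.
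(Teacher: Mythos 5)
Your proof is correct and follows the same route as the paper's: identify the diagrams obtained by row deletions with the weakly decreasing sequences $\mu_1\ge\cdots\ge\mu_k\ge 0$ bounded by $\la_i$, i.e.\ the set (\ref{partition}), and then invoke the count from the proof of Theorem \ref{dim}. The only difference is that you spell out the (easy but necessary) step of realizing $\la$ as the sequence (\ref{lambdaSequence}) of an actual $k$-subset $S$ via $s_i=\la_{k-i+1}+i$, which the paper leaves implicit.
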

\begin{proof}
  It is easily seen that the number of distinct Young diagrams obtained from the Young diagram of $\la$ by removing zero or more boxes from the rows is equal to the number of partitions $\mu_1\geq\dots\geq\mu_k\geq 0$ such that $\mu_i\leq \la_i$ for all $i=1,\dots,k$.  The desired result follows from the proof of Theorem \ref{dim}.
\end{proof}

\begin{corollary}\label{dimspeci1}
If $S=\{2,4,\ldots,2k\} \subseteq \mathbf{n}$, then the dimension of the submodule $B_nv_S$ is the Catalan number
$c_{k+1}$.
\end{corollary}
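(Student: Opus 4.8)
The plan is to translate $\dim B_n v_S$ into a count of sub-diagrams of a staircase Young diagram, and then invoke the classical enumeration of such diagrams.

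First I would recall from Lemma~\ref{mod1}(1) that $d_k=\dim B_n v_S$ is the number of $k$-subsets $T$ of $\n$ with $T\le S$, and from the opening steps of the proof of Theorem~\ref{dim} that, under the substitution $\la_i=s_{k-i+1}-(k-i+1)$, this count equals the number of integer sequences $\mu_1\ge\cdots\ge\mu_k\ge 0$ with $\mu_i\le\la_i$ for $i=1,\dots,k$. For $S=\{2,4,\dots,2k\}$ we have $s_j=2j$, so $\la_i=2(k-i+1)-(k-i+1)=k-i+1$; that is, $(\la_1,\dots,\la_k)=(k,k-1,\dots,2,1)$ is the staircase partition. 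Hence $d_k$ equals the number of Young diagrams contained in the staircase diagram $(k,k-1,\dots,1)$.

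Next I would show that this number is $c_{k+1}$. The quickest route is simply to quote the classical fact (see \cite{St}) that the number of partitions fitting inside the staircase $(k,k-1,\dots,1)$ is the Catalan number $c_{k+1}$. If a self-contained argument is wanted, one reverses the sequence by setting $\nu_i=\mu_{k+1-i}$, which turns the count into the number of weakly increasing sequences $0\le\nu_1\le\cdots\le\nu_k$ with $\nu_m\le m$ for all $m$; adjoining $\nu_0=0$ and passing to $\widetilde\nu_m=\nu_{m-1}+1$ for $1\le m\le k+1$ sets up a bijection with the set of sequences $1\le\widetilde\nu_1\le\cdots\le\widetilde\nu_{k+1}$ satisfying $\widetilde\nu_m\le m$, one of the standard families enumerated by $c_{k+1}$. (Alternatively, one verifies that $d_k$ obeys the Catalan recursion through a first-return decomposition of the lattice path associated with the boundary of $\mu$.) Either way, $\dim B_n v_S=d_k=c_{k+1}$.

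The enumeration itself is entirely classical, since the laborious counting was already carried out in Theorem~\ref{dim}; the only point requiring a little care is matching index conventions when reading $\la=(k,k-1,\dots,1)$ off the set $S$. I would finally note, as foreshadowed in the introduction, that equating the explicit formula of Theorem~\ref{dim} specialized at $\la=(k,k-1,\dots,1)$ with $c_{k+1}$ yields a nontrivial binomial identity, namely the combinatorial identity advertised there.
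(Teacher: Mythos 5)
Your proposal is correct and follows essentially the same route as the paper: specialize the sequence $\la_i=s_{k-i+1}-(k-i+1)$ from the proof of Theorem \ref{dim} to get the staircase $(k,k-1,\dots,1)$, and then invoke the classical fact that the number of partitions fitting inside this staircase is $c_{k+1}$. The explicit bijection you sketch (reversing and shifting to sequences $1\le\widetilde\nu_1\le\cdots\le\widetilde\nu_{k+1}$ with $\widetilde\nu_m\le m$) is a correct, optional elaboration of the "well known" step that the paper simply cites.
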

\begin{proof}
           The sequence $\la_1\geq \dots\geq\la_k\geq 0$ in (\ref{lambdaSequence}) associated to $S$ is now $k \geq k-1\geq\cdots\geq 1$. In this case, it is well known that the number of sequences $\mu_1\ge\mu_2\ge\ldots\ge\mu_k\ge 0$ such that $\mu_i\leq\la_i$ for $1\leq i\leq k$ is the Catalan number $c_{k+1}$. The desired result follows from Theorem \ref{dim}.
\end{proof}

We find a combinatorial identity below for the Catalan number. To our knowledge, the identity is new.
\begin{corollary}\label{comId}
If $k\geq 2$, then
\begin{equation}\label{catalannewform1}
c_{k+1}=\sum^{k-1}_{i=1}{2(k-i+1) \choose k+1-i}\gamma_i
  -\sum^{k-1}_{i=1}{2(k-i) \choose k+1-i}\gamma_i-\sum^{k-2}_{i=1}2
          {2(k-i-1) \choose k-i}\gamma_i~,
\end{equation}
where $\gamma_1=1$ and for $2\leq i\leq k$,
\begin{equation}\label{catalannewform2}
\gamma_i =-\sum^{i-2}_{j=1}
          {2(i-j-1) \choose i-j}\gamma_j~.
\end{equation}
\end{corollary}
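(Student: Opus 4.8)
The plan is to derive Corollary~\ref{comId} by specializing Theorem~\ref{dim} to the subset $S=\{2,4,\ldots,2k\}$ and then invoking Corollary~\ref{dimspeci1}. Recall that Corollary~\ref{dimspeci1} already establishes $\dim B_nv_S = c_{k+1}$ for this particular $S$; the point here is that Theorem~\ref{dim} gives a second, independent closed-form expression for the same number $d_k$, so equating the two yields the claimed identity for free.

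\begin{proof}
Take $n\ge 2k$ and $S=\{s_1<\cdots<s_k\}=\{2,4,\ldots,2k\}$, so that $s_j=2j$ for $1\le j\le k$. By Corollary~\ref{dimspeci1}, the dimension of $B_nv_S$ is $d_k=c_{k+1}$. On the other hand, Theorem~\ref{dim} expresses the same $d_k$ via formula~(\ref{dk}). We substitute $s_{k-i+1}=2(k-i+1)$, $s_1=2$, and $s_2=4$ into (\ref{dk}). The first sum becomes $\sum_{i=1}^{k-1}\binom{2(k-i+1)}{k+1-i}\gamma_i$; in the second sum $s_{k-i+1}-s_1=2(k-i+1)-2=2(k-i)$, giving $\sum_{i=1}^{k-1}\binom{2(k-i)}{k+1-i}\gamma_i$; and in the third sum $s_1=2$ while $s_{k-i+1}-s_2=2(k-i+1)-4=2(k-i-1)$, giving $\sum_{i=1}^{k-2}2\binom{2(k-i-1)}{k-i}\gamma_i$. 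This is exactly the right-hand side of (\ref{catalannewform1}). Likewise the auxiliary quantities $\gamma_j$ of Theorem~\ref{dim}, defined by $\gamma_1=1$ and $\gamma_j=-\sum_{i=1}^{j-2}\binom{s_{k+1-i}-s_{k+2-j}}{j-i}\gamma_i$ for $2\le j\le k-1$, specialize under $s_{k+1-i}=2(k+1-i)$ and $s_{k+2-j}=2(k+2-j)$ to $\gamma_j=-\sum_{i=1}^{j-2}\binom{2(k+1-i)-2(k+2-j)}{j-i}\gamma_i=-\sum_{i=1}^{j-2}\binom{2(j-i-1)}{j-i}\gamma_i$, which (after renaming the summation index) is precisely (\ref{catalannewform2}). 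Equating the two expressions for $d_k$ completes the proof.
\end{proof}

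The only point requiring care is the bookkeeping in the index substitution, namely checking that $s_{k-i+1}-s_1$, $s_{k-i+1}-s_2$, $s_{k+1-i}-s_{k+2-j}$ all collapse to the advertised even integers and that the ranges of summation match; there is no genuine obstacle beyond this routine verification, since both the identity for $d_k$ and the value $c_{k+1}$ are already in hand from the preceding results. One should also note implicitly that $\gamma_i$ for $i>k-1$ does not occur in (\ref{dk}), so extending the recursion (\ref{catalannewform2}) up to $i=k$ as stated in the corollary is harmless.
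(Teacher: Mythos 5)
Your proposal is correct and is essentially the paper's own argument: the paper likewise obtains the identity by equating the Catalan value of $\dim B_nv_S$ from Corollary \ref{dimspeci1} with the formula of Theorem \ref{dim} specialized to $S=\{2,4,\ldots,2k\}$. Your substitution bookkeeping (including the harmlessness of extending the $\gamma$ recursion to $i=k$) is accurate and in fact more explicit than the paper's one-line proof.
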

\begin{proof}
  The  combinatorial identity is obtained from Theorem \ref{dim} and Corollary \ref{dimspeci1}.
\end{proof}

\begin{corollary}\label{dimspeci2}
For the $k$-subset $\{m+1,\ldots,m+k\}$ of $\mathbf{n}$, the
dimension of the submodule $B_nv_S$ is ${m+k \choose k}$.
\end{corollary}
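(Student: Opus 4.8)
The plan is to follow the same strategy as in Corollary~\ref{dimspeci1}, reducing the computation to counting partitions inside a rectangle. By Lemma~\ref{mod1}~(1), $\dim B_nv_S$ equals the number of $k$-subsets $T$ of $\n$ with $T\le S$. For $S=\{m+1,\dots,m+k\}$ one has $s_j=m+j$, so the sequence $\lambda_i=s_{k-i+1}-(k-i+1)$ attached to $S$ in~(\ref{lambdaSequence}) is the constant sequence $\lambda_1=\dots=\lambda_k=m$. By the argument preceding~(\ref{partition}) in the proof of Theorem~\ref{dim}, $d_k$ is then the number of weakly decreasing sequences $\mu_1\ge\dots\ge\mu_k\ge 0$ with $\mu_i\le m$ for every $i$, that is, the number of partitions whose Young diagram fits inside a $k\times m$ box.

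I would then invoke (or briefly reprove) the classical fact that this number is $\binom{m+k}{k}$. A self-contained version: writing $T=\{t_1<\dots<t_k\}$, the relation $T\le S$ amounts to $t_i\le m+i$ for all $i$; since $t_i\ge i$ automatically, the substitution $u_i=t_i-i$ is a bijection from such $T$ onto the weakly increasing sequences $0\le u_1\le\dots\le u_k\le m$, and there are $\binom{m+k}{k}$ of these by a standard stars-and-bars count. This yields $\dim B_nv_S=\binom{m+k}{k}$, and it also covers $k=1$ since $d_1=s_1=m+1=\binom{m+1}{1}$.

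Alternatively, one can substitute $\lambda_i\equiv m$ directly into formula~(\ref{dk}): since all the $\lambda_i$ are equal, every binomial coefficient in the recursion for $\gamma_j$ has the shape $\binom{j-i-1}{j-i}=0$, so $\gamma_1=1$ and $\gamma_j=0$ for $2\le j\le k-1$; only the $i=1$ terms of~(\ref{dk}) survive, and because $s_k-s_1=k-1$ and $s_k-s_2=k-2$ the last two sums vanish, leaving $d_k=\binom{s_k}{k}=\binom{m+k}{k}$. There is no genuine obstacle here; the only points needing a little care are recognizing that the $\lambda$-sequence is constant and justifying the box-partition count, and I would present the combinatorial route since it is the cleanest.
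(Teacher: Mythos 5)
Your proposal is correct, and in fact it contains two complete arguments. Your second route --- substituting the constant sequence $\lambda_i\equiv m$ into the formulas of Theorem \ref{dim}, observing that $s_{k+1-i}-s_{k+2-j}=j-i-1$ forces $\gamma_j=\binom{j-i-1}{j-i}$-type terms to vanish so that $\gamma_j=0$ for $j\ge 2$, and then checking that $\binom{k-1}{k}=\binom{k-2}{k-1}=0$ kills the last two sums in (\ref{dk}) --- is exactly the paper's proof, which is stated only as ``a direct calculation using the formulas given in Theorem \ref{dim}''; you have supplied the details the paper omits, and they check out. Your preferred first route is genuinely different and more elementary: it bypasses Theorem \ref{dim} entirely, using only Lemma \ref{mod1}~(1) to identify $\dim B_nv_S$ with the number of $k$-subsets $T\le S$, and then the shift $u_i=t_i-i$ to reduce to a stars-and-bars count of weakly increasing sequences in $\{0,\dots,m\}$. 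What the combinatorial route buys is transparency and independence from the recursion of Theorem \ref{dim}; what the paper's route buys is a consistency check on that recursion (and it is the computation actually reused later in the derivation of $d_{k,m}$ in (\ref{catagene2})). Either version is acceptable; no gaps.
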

\begin{proof}
   The sequence in (\ref{lambdaSequence}) corresponding to $S$ is the $k$-subset $\{m, \ldots,m\}$. A direct calculation of $d_k$ for $k\geq 1$ using the formulas given in Theorem \ref{dim} yields
    \begin{equation}\label{}
        d_k={m+k \choose k}~,
    \end{equation}
which is the desired result.
\end{proof}

We now compute the dimension $d_{k,\, m}$ of the $B_n$-module $B_nv_{S_{k,\,m}}$, where $k\ge m$ and
$$
    S_{k,\,m}=\{2,\,4,\,\ldots,\,2m,\,2m+1,\,2m+2,\,\ldots,\,2m+(k-m)\}
$$
is a subset of $\mathbf{n}$, which is a mixture of the two types of subsets in Corollaries \ref{dimspeci1} and \ref{dimspeci2}. The sequence (\ref{lambdaSequence}) associated to $S_{k,\,m}$ is $\{m,\, m,\,\ldots,m,\,m-1,\,m-2,\,\ldots,\,1\}$ of length $k$.
Recall that if $x>y$, then $\binom{y}{x}=0$ and the empty sum $\sum_{i=x}^y\square_i = 0$. Without showing the details, from Theorem \ref{dim} we obtain, for $m\ge 2$,
\begin{eqnarray}\label{catagene2}
 \nonumber d_{k,\,m}&=&{m+k \choose k}-{m+k-2 \choose k}-2
          {m+k-4 \choose k-1} +\sum^{k-1}_{i=k-m+3}{2(k-i+1) \choose k+1-i}\gamma_i \\
  &&{}-\sum^{k-1}_{i=k-m+3}{2(k-i) \choose k+1-i}\gamma_i-\sum^{k-2}_{i=k-m+3}2
          {2(k-i-1) \choose k-i}\gamma_i
\end{eqnarray}
where $\gamma_1=1,\gamma_2=\gamma_3=\dots=\gamma_{k-m+2}=0,
\gamma_{k-m+3}=-1$, and for $i\geq k-m+4$

\begin{equation}\label{catagene3}
\gamma_i =-{m-k+2i-4\choose i-1}-\sum^{i-2}_{j=k-m+3}
          {2(i-j-1) \choose i-j}\gamma_j~.
\end{equation}Notice that $d_{k,\,k}$ is just the Catalan number $c_{k+1}$ by Corollary (\ref{dimspeci1}). Formula (\ref{catagene2}) will be used in Corollary \ref{ccd}.

Identifying an element of $B_t$ $(t< n)$ with an element of $B_n$ that fixes $t+1, \ldots,n$, we can regard $B_t$ as
a submonoid of $B_n$, so we are allowed to view any $B_n$-submodules of $V$, for example $V_k$ and its submodules, as $B_t$-modules.

Our aim below is to investigate decompositions of the $B_n$-module $W^m_k =B_nv_{S^m_k}$ into indecomposable submodules for $S^m_k=\{m+1,\ldots,m+k\}$ where $k$ and $m$ are positive integers and $k+m\le n$. Using the notation above, we obtain the following result.
\begin{proposition}\label{decomposition}
Viewed as a $B_{m+l}\,(1\leq l<k)$ module, $W^m_k =B_nv_{S^m_k}$ is decomposed into a direct sum of indecomposable submodules
\vspace {-2mm}
\begin{equation}\label{moddecom1}
W^m_k \downarrow_{B_{m+l}}^{B_{m+k}} \,\cong\, \bigoplus_{a=0}^{k-l} {k-l \choose a}W_{k-a}^{m+l-k+a} ~,
\end{equation}
\vspace {-2mm}
where ${k-l \choose a}$ is the multiplicity of the indecomposable
submodule $W_{k-a}^{m+l-k+a}$.
\end{proposition}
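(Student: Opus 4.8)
The plan is to write $W^m_k$ out as an explicit span of basis vectors and then regroup that basis according to how each indexing set meets the block $\{m+l+1,\ldots,m+k\}$, on which every element of $B_{m+l}$ acts as the identity.

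First I would apply Lemma \ref{mod1}(1). Since $S^m_k=\{m+1,\ldots,m+k\}$ is the largest $k$-subset of $\{1,\ldots,m+k\}$, a $k$-subset $S'$ of $\mathbf{n}$ satisfies $S'\le S^m_k$ exactly when $S'\subseteq\{1,\ldots,m+k\}$; hence $W^m_k=\bigoplus_{S'}Fv_{S'}$, the sum running over all $k$-subsets $S'$ of $\{1,\ldots,m+k\}$. For such an $S'$ put $P=S'\cap\{1,\ldots,m+l\}$, $Q=S'\cap\{m+l+1,\ldots,m+k\}$, and $a=|Q|$, so that $0\le a\le k-l$ and $|P|=k-a$. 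For each subset $Q$ of $\{m+l+1,\ldots,m+k\}$ of size $a$ set
\[
   W_Q=\mathrm{span}\{\,v_{P\cup Q}\mid P\subseteq\{1,\ldots,m+l\},\ |P|=k-a\,\}.
\]
Since the below-$(m+l)$ part and above-$(m+l)$ part of a $k$-subset of $\{1,\ldots,m+k\}$ recover it uniquely, the families $\{v_{P\cup Q}\}$ partition the basis of $W^m_k$, whence $W^m_k=\bigoplus_Q W_Q$ as vector spaces.

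The core of the argument is to promote this to a $B_{m+l}$-module decomposition and to identify the pieces. An element $f\in B_{m+l}$, regarded inside $B_{m+k}$ and hence inside $B_n$, fixes every point of $\{m+l+1,\ldots,m+k\}$; therefore $Q\subseteq D(f)$ automatically, and a basis vector of $W_Q$ satisfies $f\cdot v_{P\cup Q}=v_{f(P)\cup Q}$ if $P\subseteq D(f)$ and $f\cdot v_{P\cup Q}=0$ otherwise. In particular each $W_Q$ is a $B_{m+l}$-submodule, so $W^m_k=\bigoplus_Q W_Q$ is a decomposition of $B_{m+l}$-modules. The same computation shows that the linear bijection $v_{P\cup Q}\mapsto v_P$ intertwines the $B_{m+l}$-actions, so it is a $B_{m+l}$-isomorphism of $W_Q$ onto $B_{m+l}v_{\{m+l-k+a+1,\ldots,m+l\}}$; and since $\{m+l-k+a+1,\ldots,m+l\}$ is the largest $(k-a)$-subset of $\{1,\ldots,m+l\}$, Lemma \ref{mod1}(1) identifies this target as $W_{k-a}^{m+l-k+a}$. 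When $k-a>m+l$ there is no admissible $P$, so $W_Q=0$; correspondingly $W_{k-a}^{m+l-k+a}$ is then the zero module, in agreement with the convention $\binom{y}{x}=0$ for $x>y$.

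It remains to collect terms. For each fixed $a$ with $0\le a\le k-l$ there are exactly $\binom{k-l}{a}$ subsets $Q$ of size $a$, one for each $a$-element subset of the $(k-l)$-element block $\{m+l+1,\ldots,m+k\}$, and each contributes one copy of $W_{k-a}^{m+l-k+a}$; summing over $a$ yields (\ref{moddecom1}). Each $W_{k-a}^{m+l-k+a}$ is a nonzero $B_{m+l}$-submodule of $V_{k-a}$ (over the ground set $\{1,\ldots,m+l\}$), hence indecomposable by Proposition \ref{indDecom} applied with ambient monoid $B_{m+l}$, so (\ref{moddecom1}) is a decomposition into indecomposables. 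As a consistency check, dimensions match: $\dim W^m_k=\binom{m+k}{k}$ by Corollary \ref{dimspeci2}, while the right-hand side has dimension $\sum_{a=0}^{k-l}\binom{k-l}{a}\binom{m+l}{k-a}=\binom{m+k}{k}$ by Vandermonde's identity. The one genuinely delicate point is the middle step: checking that $v_{P\cup Q}\mapsto v_P$ respects the $B_{m+l}$-action and not merely the vector-space structure, which is precisely where the triviality of the action on the frozen block $\{m+l+1,\ldots,m+k\}$ enters; everything else is bookkeeping.
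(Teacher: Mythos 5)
Your proposal is correct and follows essentially the same route as the paper: decompose $W^m_k$ into the spans $W_Q$ indexed by the intersection $Q$ of the indexing $k$-subset with the frozen block $\{m+l+1,\ldots,m+k\}$, observe that $B_{m+l}$ fixes that block pointwise so each $W_Q$ is a submodule isomorphic to $W^{m+l-k+a}_{k-a}$ via $v_{P\cup Q}\mapsto v_P$, count the $\binom{k-l}{a}$ choices of $Q$, and invoke Proposition \ref{indDecom} for indecomposability. Your explicit verification that the bijection intertwines the actions, your treatment of the degenerate case $k-a>m+l$, and the Vandermonde dimension check are welcome additions but do not change the argument.
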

\begin{proof}
By Lemma \ref{mod1} (1) we have
\[
    W^m_k  = B_{m+k}v_{S^m_k} = \bigoplus_{k\text{-subsets } T\subseteq \{1,\, ...,\, m+k\}} Fv_T~.
\]
To obtain the desired indecomposable $B_{m+l}$-submodules on the right of (\ref{moddecom1}), we group the $1$-dimensional subspaces $Fv_T$ with $k$-subsets $T\subseteq\{1,\, ...,\, m+k\}$ into categories according to the intersection $\{t'_1, \ldots, t'_a\} = T\cap \{m+l+1,\,\ldots,\, m+k\}$ where $t'_1 < \cdots < t'_a$,\, $0\leq a\leq k-l$, and $1\leq l<k$.
Let $\mathcal{T}$ denote the set of all these $k$-subsets $T$. For any $T\in \mathcal{T}$, we have
\[
    T=\{t_1,\,\ldots,\,t_{k-a}, \,t'_1, \ldots, t'_a\}~,
\]
for some subset $\{t_1,\,\ldots,\,t_{k-a}\}\subseteq\{1, ..., m+l\}$ with $t_1 < \cdots < t_{k-a}$, so $m+l\ge k-a.$

Write $p = m+l-(k-a)$ and let $T_a = \{p+1,\,\ldots,\, m+l, \,t'_1, \ldots, t'_a\}.$ Then $T_a$ is a $k$-subset of $\{1,\ldots,m+k\}$, and
$T\le T_a$. Define $f\in B_{m+k}$ by
\[
    f(p+1)=t_1,\, \ldots,\, f(m+l) = t_{k-a},\, f(m+l+1) = m+l+1,\, \ldots,\, f(m+k) = m+k.
\]
Then $T=f(T_a)$. Identifying $f$ with an element of $B_{m+l}$, we have $v_{T}=f\cdot v_{T_a}\in B_{m+l}v_{T_a}$. From Lemma \ref{mod1} (i) we have
\[
    B_{m+l}v_{T_a} = \bigoplus_{T\in\, \mathcal{T}}Fv_T ~.
\]

We next show that
$
    B_{m+l}v_{T_a} \cong B_{m+l}v_{S_{k-a}^{p}}
$
as $B_{m+l}$-modules. Notice that
\[
    S_{k-a}^{p}=\{p+1,\,\ldots,\, p+(k-a)\} = \{p+1,\,\ldots,\, m+l\} \subseteq \{1, ..., m+l\}~.
\]
Let
$
    \mathcal{U} = \{U\subseteq \{1, ..., m+l\}\mid U\le S_{k-a}^{p}\}.
$
By Lemma \ref{mod1} (i) we have
\[
    B_{m+l}v_{S_{k-a}^{p}} = \bigoplus_{U\in\, \mathcal{U}}Fv_U~.
\]
Since
$
    T_a = S_{k-a}^{p}\cup \{t'_1,\, \ldots, t'_a\},
$
the map of $\mathcal{T}$ to $\mathcal{U}$ defined by
\[
    T \mapsto T\cap\{1,\,\cdots,\,m+l\} = \{t_1,\,\ldots,\,t_{k-a}\}
\]
is one-to-one and onto. Since $B_{m+l}$ fixes $\{t'_1,\, \ldots, t'_a\}$ pointwise, the map defined by
$
    v_T \mapsto v_{\{t_1,\, \ldots,\, t_{k-a}\}}
$
leads to a $B_{m+l}$-module isomorphism of $B_{m+l}v_{T_a}$ onto $B_{m+l}v_{S_{k-a}^{p}}$, which is indecomposable by Proposition \ref{indDecom}.

Since there are ${k-l \choose a}$ ways to choose $\{t'_1,\,\ldots,\, t'_a\}$
from $\{m+l+1,\ldots,m+k\}$, there are the same number of corresponding
modules $B_{m+l}v_{T_a}$ in $W^m_k$. Thus
\[
W^m_k \downarrow_{B_{m+l}}^{B_{m+k}} \,\cong\, \bigoplus_{a=0}^{k-l} {k-l \choose a}B_{m+l}v_{T_a}~,
\]
and the proof is complete.
\end{proof}

\begin{corollary} Let $k, l, m$ be positive integers with $l<k$.
    \begin{equation}\label{moddecomform}
        {m+k \choose k}=\sum_{a=0}^{k-l} {k-l \choose a}{m+l \choose k-a}~.
    \end{equation}
\end{corollary}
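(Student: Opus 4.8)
The plan is to read the identity off as a dimension count on the two sides of the module isomorphism (\ref{moddecom1}) established in Proposition \ref{decomposition}. First I would pin down the dimension of the left-hand side: since $W^m_k = B_n v_{S^m_k}$ with $S^m_k = \{m+1,\ldots,m+k\}$, Corollary \ref{dimspeci2} gives immediately $\dim W^m_k = \binom{m+k}{k}$.

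Next I would compute the dimension of each indecomposable summand appearing on the right of (\ref{moddecom1}). Each such summand is $W^{m+l-k+a}_{k-a} = B_n v_{S^{m+l-k+a}_{k-a}}$, which has exactly the same shape as $W^m_k$ but with $m$ replaced by $m+l-k+a$ and $k$ replaced by $k-a$; since $(m+l-k+a)+(k-a) = m+l$, a second application of Corollary \ref{dimspeci2} yields $\dim W^{m+l-k+a}_{k-a} = \binom{m+l}{k-a}$. Because (\ref{moddecom1}) presents $W^m_k$ as a direct sum in which $W^{m+l-k+a}_{k-a}$ occurs with multiplicity $\binom{k-l}{a}$ for $a=0,\ldots,k-l$, additivity of dimension over direct sums gives
\[
\binom{m+k}{k} = \sum_{a=0}^{k-l}\binom{k-l}{a}\binom{m+l}{k-a},
\]
which is precisely (\ref{moddecomform}).

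The only point needing a moment's care is the index bookkeeping: one should check that $S^{m+l-k+a}_{k-a}$ is a genuine subset of $\mathbf{n}$ for the relevant values of $a$, i.e. that $m+l-k+a\ge 0$; for $a<k-l-m$ (which can occur only when $m<k-l$) this fails, but then $k-a>m+l$ and, under the convention $\binom{y}{x}=0$ for $x>y$ already in force, such terms contribute $0$ on the right, consistently with the fact that in the proof of Proposition \ref{decomposition} no $k$-subset $T\subseteq\{1,\ldots,m+k\}$ meets $\{m+l+1,\ldots,m+k\}$ in fewer than $\max(0,k-m-l)$ elements. So no genuine obstacle arises; the entire content of the corollary is carried by Proposition \ref{decomposition} together with Corollary \ref{dimspeci2}. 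As a consistency check, one may also observe that (\ref{moddecomform}) is the Vandermonde convolution $\binom{p+q}{r}=\sum_{a}\binom{p}{a}\binom{q}{r-a}$ with $p=k-l$, $q=m+l$, and $r=k$, giving an independent purely combinatorial verification.
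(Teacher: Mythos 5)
Your proof is correct and follows exactly the paper's argument: apply Corollary \ref{dimspeci2} to both sides of the decomposition (\ref{moddecom1}) from Proposition \ref{decomposition} and equate dimensions. Your additional care about the degenerate indices $a<k-l-m$ (handled by the convention $\binom{y}{x}=0$) and the remark that the identity is Vandermonde's convolution go slightly beyond what the paper writes, but the route is the same.
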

\begin{proof}
  Corollary \ref{dimspeci2} shows that $\dim W^m_k={m+k \choose k}$ and $\dim W_{k-a}^{m+l-k+a} = {m+l \choose k-a}$. Inserting them into (\ref{moddecom1}), we complete the proof.
\end{proof}

We apply the same procedure in the proof of Proposition \ref{decomposition} to deal with, without showing all the details, the decomposition of the $B_{2k}$-module $W_k=B_nv_{S_k}$ into indecomposable $B_{2(k-1)}$-modules for $S_k=\{2,4,\ldots,2k\}$ with $2k\le n$.  We regard $W_k$ as a $B_{2(k-1)}$-module.

Let $S$ be a $k$-subset of $\mathbf{n}$ such that $S\leq S_k$. If $S$ contains $2k$, then $v_S=fv_{S_k}$ for some $f\in B_{2(k-1)}$ since $B_{2(k-1)}$ fixes $2k$. Thus the module $B_{2(k-1)}v_{S_k}$ contains  basis vectors $v_S$ where $S$ runs through all $k$-subsets containing $2k$, so $B_{2(k-1)}v_{S_k}$ is isomorphic to the submodule
$W_{k-1} = B_{2(k-1)}v_{S_{k-1}}$.

If $S$ does not contain $2k$ but contains $2k-1$, then $S\leq T=\{2, \,4,\,\ldots,\,2(k-1),\,2k-1\}$, and since $B_{2(k-1)}$
fixes $2k-1$, we have $v_S=fv_{T}$ for some $f\in B_{2(k-1)}$. Thus the module $B_{2(k-1)}v_{T}$ contains basis vectors $v_S$ where $S$ runs through all $k$-subsets containing $2k-1$, so it is isomorphic to the submodule $W_{k-1} = B_{2(k-1)}v_{S_{k-1}}$.

If $S$ contains neither $2k$ nor $2k-1$, then $S\leq S_{k,\, k -2}=\{2,4,\ldots,2(k-2),2k-3,2k-2\}$, and we have
$v_S=fv_{S_{k,\, k -2}}$ for some $f\in B_{2(k-1)}$. The module $B_{2(k-1)}v_{k,\, k -2}$ then contains basis vectors $v_S$ where $S$ runs through all $k$-subsets containing neither $2k$ nor $2k-1$. Therefore, we have shown
\begin{proposition}
For the $k$-subset $S_k=\{2,4,\ldots,2k\}$ of $\mathbf{n}$, let
$W_k=B_nv_{S_k}$. Then viewed as a $B_{2(k-1)}$ module, we have the
decomposition of $W_k$ into a direct sum of indecomposable
submodules
\begin{equation}\label{moddecom2}
W_k\downarrow_{B_{2(k-1)}}^{B_{2k}}\cong 2W_{k-1} \oplus
B_{2(k-1)}v_{S_{k,\,k-2}}~.
\end{equation}
\end{proposition}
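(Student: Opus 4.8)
The plan is to mimic the proof of Proposition \ref{decomposition}, but with the partition $\lambda = (k, k-1, \ldots, 1)$ associated to $S_k$, organizing the $k$-subsets $T \le S_k$ of $\mathbf{n}$ according to their intersection with the top two positions $\{2k-1, 2k\}$ (equivalently, according to whether the Young diagram of a sub-partition $\mu \le \lambda$ fills its first row). By Lemma \ref{mod1} (1), $W_k = B_{2k}v_{S_k} = \bigoplus_{T} Fv_T$ where $T$ runs over all $k$-subsets of $\{1, \ldots, 2k\}$ with $T \le S_k$. Since $B_{2(k-1)}$ fixes $2k-1$ and $2k$, this decomposition is stable under $B_{2(k-1)}$, and it suffices to partition the indexing set of $T$'s into three $B_{2(k-1)}$-invariant blocks and identify the corresponding submodules.

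The three blocks are: (i) those $T$ containing $2k$; (ii) those $T$ not containing $2k$ but containing $2k-1$; (iii) those $T$ containing neither $2k$ nor $2k-1$. First I would check that each block is $B_{2(k-1)}$-invariant: if $f \in B_{2(k-1)}$ and $S \subseteq D(f)$, then $f$ is the identity on $2k-1, 2k$, so $f(S)$ contains $2k$ (resp.\ $2k-1$) if and only if $S$ does, and membership in each block is preserved; if $S \not\subseteq D(f)$ then $f \cdot v_S = 0$. Next, for block (i) I would verify that $S \le S_k$ with $2k \in S$ forces $S \setminus \{2k\} \le S_{k-1} = \{2, 4, \ldots, 2(k-1)\}$, so that the module spanned by block (i) is $B_{2(k-1)}v_{S_k}$, and the bijection $S \mapsto S \setminus \{2k\}$ (which commutes with the $B_{2(k-1)}$-action since $2k$ is fixed) gives an isomorphism with $W_{k-1} = B_{2(k-1)}v_{S_{k-1}}$, which is indecomposable by Proposition \ref{indDecom}. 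For block (ii) the analogous argument uses that $S \le S_k$, $2k \notin S$, $2k-1 \in S$ implies $S \le T_0 := \{2, 4, \ldots, 2(k-1), 2k-1\}$, and one checks $T_0 \le S_k$ directly by comparing coordinatewise (the $k$-th coordinate $2k-1 \le 2k$, the rest equal); then the same fixed-point bijection $S \mapsto S \setminus \{2k-1\}$ gives $B_{2(k-1)}v_{T_0} \cong W_{k-1}$, again indecomposable. For block (iii), the maximal such $k$-subset is $S_{k, k-2} = \{2, 4, \ldots, 2(k-2), 2k-3, 2k-2\}$: I would confirm $S_{k,k-2} \le S_k$ and that any $k$-subset $S \le S_k$ avoiding $2k-1$ and $2k$ satisfies $S \le S_{k,k-2}$ (the top coordinate of $S$ is at most $2k-2$, the second-from-top at most $2k-2$ as well, forcing the bound), so block (iii) spans exactly $B_{2(k-1)}v_{S_{k,k-2}}$, which is a submodule of $V_k$ and hence indecomposable by Proposition \ref{indDecom}.

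Assembling the three pieces yields the internal direct sum $W_k = B_{2(k-1)}v_{S_k} \oplus B_{2(k-1)}v_{T_0} \oplus B_{2(k-1)}v_{S_{k,k-2}}$ as $B_{2(k-1)}$-modules, and since the first two summands are each isomorphic to $W_{k-1}$, this gives the claimed decomposition $W_k \downarrow_{B_{2(k-1)}}^{B_{2k}} \cong 2W_{k-1} \oplus B_{2(k-1)}v_{S_{k,k-2}}$, with each summand indecomposable.

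The main obstacle I anticipate is not conceptual but combinatorial bookkeeping: one must be careful that the claimed inequalities $T_0 \le S_k$ and $S_{k,k-2} \le S_k$ hold under the coordinatewise order (this requires comparing the sorted tuples position by position, and it is easy to misalign indices when an element like $2k-1$ sits in the $k$-th slot), and that every $k$-subset $S \le S_k$ really does land in exactly one of the three blocks with the stated upper bound — in particular, that in block (iii) the second-largest element of $S$ cannot exceed $2k-2$, which uses $s_{k-1} \le (S_k)_{k-1} = 2(k-1) = 2k-2$. Once these order comparisons are nailed down, the $B_{2(k-1)}$-equivariance of the three bijections is immediate because $B_{2(k-1)}$ acts trivially on the removed coordinates, and indecomposability of each summand is handed to us by Proposition \ref{indDecom}.
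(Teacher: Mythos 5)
Your proof is correct and follows essentially the same route as the paper: the paper likewise splits the basis vectors $v_S$ with $S\le S_k$ into three blocks according to whether $S$ contains $2k$, contains $2k-1$ but not $2k$, or contains neither, identifies the first two blocks with $W_{k-1}$ using that $B_{2(k-1)}$ fixes $2k$ and $2k-1$, and recognizes the third as $B_{2(k-1)}v_{S_{k,k-2}}$. One small correction to your block (iii) bookkeeping: the bound you actually need is $s_{k-1}\le 2k-3$ (the $(k-1)$st entry of $S_{k,k-2}$), which follows from $s_{k-1}<s_k\le 2k-2$, not from $s_{k-1}\le (S_k)_{k-1}=2k-2$ alone.
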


\begin{corollary}\label{ccd} Let $k\ge 2$. Then the $(k+1)${\rm st} Catalan number
    \begin{equation}\label{ccd} c_{k+1}=2c_k + d_{k,\,k-2}~,
    \end{equation}
    where $c_k$ is the $k$th Catalan number and $d_{k,\,k-2}$ is the dimension of $B_{2(k-1)}v_{S_{k,\,k-2}}$.
\end{corollary}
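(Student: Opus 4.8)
The plan is to read Corollary~\ref{ccd} as a direct numerical consequence of the module decomposition in \eqref{moddecom2}, combined with the dimension facts already established. First I would recall that by Corollary~\ref{dimspeci1} the module $W_k = B_nv_{S_k}$ with $S_k = \{2,4,\ldots,2k\}$ has dimension $\dim W_k = c_{k+1}$, and likewise $\dim W_{k-1} = c_k$. These dimensions do not depend on whether $W_k$ is regarded as a $B_{2k}$-module or restricted to $B_{2(k-1)}$, since the underlying vector space is unchanged and the dimension of a module over $F$ is just the dimension of that vector space.

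Next I would invoke the decomposition \eqref{moddecom2}, namely $W_k\downarrow_{B_{2(k-1)}}^{B_{2k}}\cong 2W_{k-1}\oplus B_{2(k-1)}v_{S_{k,\,k-2}}$, which is a direct sum of submodules. Since dimension is additive over direct sums, taking dimensions of both sides yields $\dim W_k = 2\dim W_{k-1} + \dim B_{2(k-1)}v_{S_{k,\,k-2}}$. Substituting $\dim W_k = c_{k+1}$, $\dim W_{k-1} = c_k$, and the notation $d_{k,\,k-2} = \dim B_{2(k-1)}v_{S_{k,\,k-2}}$ introduced just before the corollary, this is precisely the claimed identity $c_{k+1} = 2c_k + d_{k,\,k-2}$.

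There is essentially no obstacle here: the corollary is a bookkeeping statement that packages the already-proven module isomorphism \eqref{moddecom2} together with the already-computed Catalan dimensions into a closed combinatorial identity. The only thing worth a line of care is the observation that $d_{k,\,k-2}$ is a well-defined number—it is the dimension of the specific submodule $B_{2(k-1)}v_{S_{k,\,k-2}}$ appearing as a summand in \eqref{moddecom2}, with $S_{k,\,k-2} = \{2,4,\ldots,2(k-2),\,2k-3,\,2k-2\}$—and that its value can in principle be extracted from Theorem~\ref{dim} or from formula~\eqref{catagene2}, though the corollary statement itself only needs to reference it, not to evaluate it. So I would write the proof in two sentences: cite Corollary~\ref{dimspeci1} for $\dim W_k = c_{k+1}$ and $\dim W_{k-1}=c_k$, then take dimensions in \eqref{moddecom2} and read off \eqref{ccd}.
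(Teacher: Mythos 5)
Your proposal matches the paper's proof exactly: both cite Corollary \ref{dimspeci1} for $\dim W_k = c_{k+1}$ and $\dim W_{k-1} = c_k$, then take dimensions in the decomposition (\ref{moddecom2}) and read off the identity. The argument is correct and no different route is taken.
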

\begin{proof}
By Corollary \ref{dimspeci1}, the dimension of $W_k$ is $c_{k+1}$ and that of $W_{k-1}$ is $c_{k}$.
The dimension $d_{k,\,k-2}$ of $B_{2(k-1)}v_{S_{k,\,k-2}}$ is given in (\ref{catagene2}). Putting them into (\ref{moddecom2}), we obtain the desired combinatorial identity (\ref{ccd}).
\end{proof}

\section{Presentation on generators and relations}
We use the method of Section 3 of Herbig \cite{H} to describe generators and relations of $B_n$.
Some preparations are needed. We define a new monoid $\hat{B}_n$ generated by symbols $\l_i,\e_i, \hat 1$ subject to the relations:
\vspace{-2mm}
\begin{enumerate}[{\rm(i)}]
  \item $\e_i^2=\e_i$
  \vspace{-2mm}
  \item $\l_i\l_{i+1}\l_i=\l_i\l_{i+1}=\l_{i+1}\l_i\l_{i+1}$
  \vspace{-2mm}
  \item $\l_i\e_i=\l_i=\e_{i+1}\l_i$
  \vspace{-2mm}
  \item $\l_i\e_{i+1}=\e_i\e_{i+1}=\e_i\l_i=\l_i^3=\l_i^2$
  \vspace{-2mm}
  \item $\e_i\l_j=\l_j\e_i$ for $i\neq j,j+1$
  \vspace{-2mm}
  \item $\l_i\l_j=\l_j\l_i$ for $|i-j|\geq 2$
  \vspace{-2mm}
  \item $\e_i\e_j=\e_j\e_i$ for all $i,j$~.
\end{enumerate}
\vspace{-2mm}

For $a, b\in \mathbf{n}$ with $a>b$, let $\L^{a,\,a}=\hat{1}$ and
$
    \L^{a,\,b}=\l_b\l_{b+1}\cdots \l_{a-2}\l_{a-1}.
$
For any subsets $S=\{s_1<\dots<s_k\}$ and $T=\{t_1<\dots<t_k\}$ of $\mathbf{n}$ satisfying $s_j\geq t_j$ for all $1\leq j\leq k$,
let
\begin{equation}\label{uv}
  U=\mathbf{n}-S=\{u_1<\cdots<u_{n-k}\}\quad\text{and}\quad V=\mathbf{n}-T=\{v_1<\cdots<v_{n-k}\}~.
\end{equation}
Define
$
   \E_S = \e_{u_1}\cdots \e_{u_{n-k}},\,\L^{S,\,T} =\L^{s_k,\,t_k}\cdots \L^{s_1,\,t_1},\text{ and } \E_T=\e_{v_1}\cdots \e_{v_{n-k}}~,
$
where we agree that $\E_\mathbf{n} = \hat{1}$. The word $W^S_T=\E_T\L^{S,\,T}\E_S$ is called a {\em standard word} of $\hat{B}_n$. The following proposition shows that every element of $\hat{B}_n$ is equivalent, under the relations (i) -- (vii), to one of the standard words. First, we note that the generators $\l_i,\e_i$ and $\hat{1}$ are themselves standard words.
\begin{proposition}\label{ml}
\begin{enumerate}[{\rm(1)}]
  \item $W^S_T\l_i=W^{S^\prime}_{T^\prime},$ where
$$(S^\prime,\,T^\prime)=
        \left\{
         \begin{array}{ll}
         (S,\,T) & \text{\rm if }\, i,\,i+1\notin S \\
         (S\backslash\{i\},\,T\backslash\{t_{c+1}\}) & \text{\rm if }\, i,\,i+1\in S  \\
          ((S\backslash\{i\})\cup\{i+1\},\,T) & \text{\rm if }\, i\in S,\,i+1\notin S \\
          (S\backslash\{i+1\},\,T\backslash\{t_{c+1}\}) & \text{\rm if }\, i\notin S,\,i+1\in S \\
          \end{array}
          \right.
        $$where $i+1$ is mapped to $t_{c+1}$ under $W^S_T$.

  \item $W^S_T\e_i=W^{S^\prime}_{T^\prime},$ where
$$(S^\prime,\,T^\prime)=
        \left\{
         \begin{array}{ll}
         (S,\,T) & \text{\rm if }\, i\notin S \\
          (S\backslash\{i\},\,T\backslash\{t_c\})& \text{\rm if }\, i\in S \\
          \end{array}
          \right.
        $$where $i$ is mapped to $t_c$ under $W^S_T$.

  \item $\l_iW^S_T=W^{S^\prime}_{T^\prime},$ where
$$(S^\prime,\,T^\prime)=
        \left\{
         \begin{array}{ll}
         (S,\,T) & \text{\rm if }\, i,\,i+1\notin T \\
         (S\backslash\{s_c\},\,T\backslash\{i+1\}) & \text{\rm if }\, i,\,i+1\in T \\
         (S\backslash\{s_c\},\,T\backslash\{i\}) & \text{\rm if }\, i\in T,\,i+1\notin T \\
          (S,\,(T\backslash\{i+1\})\cup\{i\}) & \text{\rm if }\, i\notin T,\,i+1\in T \\
          \end{array}
          \right.
        $$where $s_c$ is mapped to $i$ under $W^S_T$.
  \item $\e_iW^S_T=W^{S^\prime}_{T^\prime},$ where
$$(S^\prime,\,T^\prime)=
        \left\{
         \begin{array}{ll}
         (S,\,T) & \text{\rm if }\, i\notin T  \\
         (S\backslash\{s_c\},\,T\backslash\{i\}) & \text{\rm if }\, i\in T \\
         \end{array}
          \right.
        $$where $s_c$ is mapped to $i$ under $W^S_T$.
\end{enumerate}
\end{proposition}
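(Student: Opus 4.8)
The plan is to verify each of the four formulas by direct computation with the standard word $W^S_T = \E_T\L^{S,\,T}\E_S$, pushing the generator $\l_i$ or $\e_i$ through the three factors $\E_S$, $\L^{S,\,T}$, $\E_T$ using only relations (i)--(vii). Parts (3) and (4) will follow from (1) and (2) by a symmetry argument, so the real work is in the first two parts. I would organize the right-multiplication cases according to how $i$ and $i+1$ sit relative to the set $S$, exactly as in the four-way (resp. two-way) branching of the statement. Throughout I would keep in mind that $W^S_T$, read as a partial map, sends $s_j\mapsto t_j$, and that the target data $(S',T')$ is forced once we know the answer must again be a standard word; so the computation is really a consistency check that the algebra produces the combinatorially correct standard word.

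First I would handle $W^S_T\e_i$ (part (2)), since $\e_i$ is the simpler generator. If $i\notin S$, then $i = u_r$ for some $r$, so $\e_i$ appears as a literal factor of $\E_S$; using (vii) to slide it to its place and (i), $\E_S\e_i = \E_S$, giving $(S',T')=(S,T)$. If $i\in S$, write $\E_S\e_i$ and commute $\e_i$ rightward past the $\e_{u}$'s by (vii); it cannot be absorbed, so we must bring it left through $\L^{S,\,T}$. Here I would use relations (iii)--(v): $\e_i$ commutes with any $\l_j$ for which $j\neq i-1,\,i$ by (v), and when it meets the block $\L^{s_c,\,t_c}=\l_{t_c}\cdots\l_{i-1}$ ending at $\l_{i-1}$ (recall $i=s_c$), the identity $\l_{i-1}\e_i = \l_{i-1}$ of (iii) lets it be consumed, but only after it has converted that $\L$-block into a factor that, combined with the already-present $\e$'s, rebuilds $\E_{T'}$ with $t_c$ removed from $T$. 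The bookkeeping is: removing $i$ from $S$ adds a new element to $U$, which adds an $\e$ on the $S$-side; removing $t_c$ from $T$ removes $\L^{s_c,t_c}$ from the middle and adds the corresponding $\e_{t_c}$ on the $T$-side. I expect this absorption step — showing the residue of $\l_{t_c}\cdots\l_{i-1}$ after eating an $\e_i$ is exactly $\e_{t_c}$ modulo the rest of the word — to be the main obstacle, and it will lean on the ``collapsing'' relations (iv), $\l_i^2 = \l_i\e_{i+1} = \e_i\l_i$, together with (iii).

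Next I would do $W^S_T\l_i$ (part (1)) by the same strategy but with four cases. When $i,i+1\notin S$, both $\e_i$ and $\e_{i+1}$ occur in $\E_S$, and $\l_i$ gets absorbed on the right since $\e_i\e_{i+1}\l_i = \e_i\e_{i+1}$ by (iv) after commuting — giving $(S,T)$. When $i\in S,\,i+1\notin S$, only $\e_{i+1}$ is on the right; $\l_i\e_{i+1}=\l_i$ by (iii) fails to absorb it, instead $\l_i$ commutes left to meet the $\L$-block ending the image of $i$, and the relations (ii), $\l_i\l_{i+1}\l_i = \l_i\l_{i+1}$, reindex that block so that $i$ is replaced by $i+1$ in $S$ with $T$ unchanged. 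The remaining two cases ($i,i+1\in S$, and $i\notin S,\,i+1\in S$) both force a deletion from $T$: here I would track how $\l_i$, unable to be absorbed, travels through $\L^{S,\,T}$, triggers a $\l^2$ collapse via (iv), and emerges as an $\e$ on the $T$-side indexed by $t_{c+1}$, the image of $i+1$. Finally, for parts (3) and (4), rather than repeat everything I would observe that the defining relations (i)--(vii) are preserved under the anti-automorphism $\tau$ of $\hat B_n$ fixing each generator and reversing products (one checks this relation by relation), and that $\tau(W^S_T) = W^T_S$ in the sense of swapping the roles of domain and range; applying $\tau$ to parts (1) and (2) then yields (3) and (4) after translating $S\leftrightarrow T$. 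The only subtlety there is matching the case descriptions, which is routine once the anti-automorphism is in hand.
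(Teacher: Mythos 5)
Your treatment of parts (1) and (2) follows essentially the same route as the paper: the same four-way (resp.\ two-way) case split on the position of $i$ and $i+1$ relative to $S$, with $\l_i$ or $\e_i$ either absorbed into $\E_S$ via (i), (iii), (iv), or pushed left through $\L^{S,\,T}$, where the collapsing identities $\l_{j-1}\e_j=\e_{j-1}\e_j$ and $\l_i^2=\e_i\e_{i+1}$ produce the extra $\e_{t_c}$ (or $\e_{t_{c+1}}$) that then commutes out to join $\E_T$. That is exactly the paper's computation, and the ``absorption step'' you single out as the main obstacle is indeed where the work lies (the paper's identities $\L^{i+1,\,t_{c+1}}\L^{i,\,t_c}\l_i=\L^{i+1,\,t_c}$ and $\L^{i+1,\,t_c}=\e_{t_{c+1}}\L^{i+1,\,t_c}$). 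Two of your citations are off --- $\l_i\e_{i+1}$ equals $\l_i^2$ by (iv), not $\l_i$; it is $\e_{i+1}\l_i=\l_i$ that (iii) supplies --- but these are bookkeeping slips, not gaps.

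The genuine gap is your derivation of parts (3) and (4). There is no anti-automorphism of $\hat{B}_n$ fixing each generator and reversing products: the relation set is not invariant under word reversal. Reversing (iii) would force $\e_i\l_i=\l_i$, whereas (iv) asserts $\e_i\l_i=\l_i^2=\e_i\e_{i+1}$, and in $B_n$ one checks directly that $e_il_i\ne l_i$ (the left side is the idempotent with domain $\mathbf{n}\setminus\{i,i+1\}$, the right side is not idempotent). Likewise reversing (ii) would force $\l_i\l_{i+1}\l_i=\l_{i+1}\l_i$, but $l_il_{i+1}\ne l_{i+1}l_i$ in $B_n$ --- which is precisely why (vi) excludes $|i-j|=1$. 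So the ``relation by relation'' check you defer actually fails, and parts (3) and (4) do not follow from (1) and (2) by that symmetry; the paper instead runs the analogous left-hand computation, casing on how $i$ and $i+1$ sit in $T$ and absorbing into $\E_T$. If you want a symmetry shortcut, the correct one is the anti-automorphism induced by $A\mapsto JA^{\mathrm{T}}J$ with $J$ the antidiagonal permutation matrix, i.e.\ $\l_i\mapsto\l_{n-i}$, $\e_j\mapsto\e_{n+1-j}$ together with product reversal; this does preserve (i)--(vii), but it carries $W^S_T$ to $W^{T^*}_{S^*}$ with $S^*=\{\,n+1-s \mid s\in S\,\}$, so you must also translate all the case descriptions through the index reversal --- at which point the saving over the direct computation is modest.
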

\begin{proof}

We use relations (i) to (vii) repeatedly, but sometimes we do not mention them.
We divide the proof of part (1) into four cases.

Case 1.1: neither $i$ nor $i+1$ is in $S$. Then $i,i+1\in U$ and $\hat{e_i},\e_{i+1}$ appear in $\E_S$. Since the items in $\E_S$ commute with $\hat{l_i}$ except $\hat{e_i},\e_{i+1}$, we have
\begin{eqnarray*}
W^S_T\l_i &=& \E_T\L^{S,\,T}\e_{u_1}c\dots \e_i\e_{i+1}\cdots
              \e_{u_{n-k}}\l_i \\
   &=& \E_T\L^{S,\,T}\e_{u_1}\cdots\e_i\e_{i+1}\l_i\cdots
       \e_{u_{n-k}}  \\
   &=& \E_T\L^{S,\,T}\e_{u_1}\cdots\e_i\l_i\cdots \e_{u_{n-k}}  \quad\quad\text{(by \rm (iii))}\\
   &=& \E_T\L^{S,\,T}\e_{u_1}\cdots\e_i\e_{i+1}\cdots \e_{u_{n-k}} \quad\text{(by \rm (iv))} \\
   &=& W^S_T~.
\end{eqnarray*}
Case 1.2: both $i$ and $i+1$ are in $S$. We have $i,i+1\notin U$, and every term in $\E_S$ commutes with $\l_i$ by (v).
From (iii) we get
$$
    \E_S\l_i=\E_S\l_i\e_i =\l_i\E_S\e_i=\l_i\E_{S^\prime}~,
$$
where $S^\prime=S\backslash\{i\}.$
Let $s_c=i$. Then $s_{c+1}=i+1$ since $i+1\in S$. For $j<c$, by (vi) the terms in $\L^{s_j,t_j}$ commute with $\l_i$, as
their indices are less than or equal to $i-2$. Thus
\begin{eqnarray}\label{wstl}
\nonumber  W^S_T\l_i &=& \E_T\L^{S,\,T}\E_S\l_i \\
\nonumber   &=& \E_T\L^{S,\,T}\l_i\E_{S^\p} \\
   &=& \E_T\L^{s_k,\,t_k}\cdots\L^{s_{c+2},\,t_{c+2}}(\L^{i+1,\,t_{c+1}}\L^{i,\,t_{c}}\l_i)\L^{s_{c-1},\,t_{c-1}}\cdots\L^{s_1,\,t_1}\E_{S^\p}~.
\end{eqnarray}
We now show
\begin{equation}\label{lll}
    \L^{i+1,\,t_{c+1}}\L^{i,\,t_c}\l_i = L^{i+1,\, t_{c}}~.
\end{equation}
Indeed, by (vi) the term $\l_i$ in $\L^{i+1,\,t_{c+1}}=\l_{t_{c+1}}\cdots\l_{i-1}\l_i$ commutes with all of the terms of $\L^{i,\,t_{c}}=\l_{t_c}\cdots\l_{i-2}\l_{i-1}$ until $\l_{i-1}$, where we use (ii): $\l_i\l_{i-1}\l_i=\l_{i-1}\l_i$ to simplify.
Repeating the same procedure for each of the remaining terms of $\L^{i+1,\,t_{c+1}}$, we conclude
\begin{eqnarray*}
   \L^{i+1,\,t_{c+1}}\L^{i,\,t_c}\l_i &=& (\l_{t_{c+1}}\cdots\l_{i-1}\l_i)(\l_{t_c}\cdots\l_{i-1})\l_i \\
                                                &=& (\l_{t_{c+1}}\cdots\l_{i-1})(\l_{t_c}\cdots\l_i\l_{i-1}\l_i) \quad\text{\rm by (vi)} \\
                   &=& (\l_{t_{c+1}}\cdots\l_{i-1})(\l_{t_c}\cdots\l_{i-1}\l_i)  \quad\text{\rm by (ii)}\\
                    &\vdots&  \\
                    &=& \l_{t_c}\cdots\l_{i-1}\l_i \\
                    &=& L^{i+1,\, t_{c}}~.
  \end{eqnarray*}
Putting (\ref{lll}) into (\ref{wstl}), we obtain
\begin{eqnarray}\label{wstl1}
\nonumber W^S_T\l_i &=&    \E_T\L^{s_k,\,t_k}\cdots\L^{s_{c+2},\,t_{c+2}}\L^{i+1,\,t_{c}}\L^{s_{c-1},\,t_{c-1}}\cdots\L^{s_1,\,t_1}\E_{S^\p}\\
&=& \E_T\L^{S^\prime,\,T^\p}\E_{S^\prime}
\end{eqnarray}
where $S'=S\setminus\{i\}$ and $T'=T\setminus\{t_{c+1}\}$.

The right hand side of (\ref{wstl1}) is not a standard word yet because $\e_{t_{c+1}}$ is still missing in $\E_T$. Since
$t_c\leq t_{c+1}-1\leq i$, the term $\l_{(t_{c+1}-1)}$ appears in $\L^{i+1,\,t_{c}}=\l_{t_c}\cdots\l_i$. Notice that
$\l_{(t_{c+1}-2)}\l_{(t_{c+1}-1)}=\l_{(t_{c+1}-1)}\l_{(t_{c+1}-2)}\l_{(t_{c+1}-1)}$ and $\e_{t_{c+1}}\l_{(t_{c+1}-1)}=\l_{(t_{c+1}-1)}$. We have
\begin{eqnarray*}
L^{i+1,\, t_{c}}  &=& \l_{t_c}\cdots\l_{(t_{c+1}-3)}(\l_{(t_{c+1}-2)}\l_{(t_{c+1}-1)}) \cdots\l_i \\
   &=& \l_{t_c}\cdots\l_{(t_{c+1}-3)}(\l_{(t_{c+1}-1)}\l_{(t_{c+1}-2)} \l_{(t_{c+1}-1)}) \cdots\l_i \\
   &=& \l_{t_c}\cdots\l_{(t_{c+1}-3)}(\e_{t_{c+1}}\l_{(t_{c+1}-1)})\l_{(t_{c+1}-2)}\l_{(t_{c+1}-1)}  \cdots\l_i \\
   &=& \l_{t_c}\cdots\l_{(t_{c+1}-3)}\e_{t_{c+1}} \l_{(t_{c+1}-2)}\l_{(t_{c+1}-1)} \cdots\l_i\\
   &=& \e_{t_{c+1}}(\l_{t_c}\cdots \l_{(t_{c+1}-3)}\l_{(t_{c+1}-2)}\l_{(t_{c+1}-1)} \cdots\l_i)\\
   &=& \e_{t_{c+1}}L^{i+1,\, t_{c}}~,
\end{eqnarray*}
where we have used that $\e_{t_{c+1}}$ commutes with all terms on its left.
Inserting the above result into (\ref{wstl1}) and knowing that $\e_{t_{c+1}}$ commutes with $\L^{s_k,\,t_k}\cdots\L^{s_{c+2},t_{c+2}}$, we deduce
\begin{eqnarray*}
W^S_T\l_i &=& \E_T (\L^{s_k,\,t_k}\cdots\L^{s_{c+2},\,t_{c+2}})(\e_{t_{c+1}}L^{i+1,\, t_{c}}) \L^{s_{c-1},\,t_{c-1}}\cdots\L^{s_1,\,t_1}\E_{S^\prime} \\
&=& \E_T\e_{t_{c+1}}(\L^{s_k,\,t_k}\cdots\L^{s_{c+2},\,t_{c+2}} L^{i+1,\, t_{c}} \L^{s_{c-1},\,t_{c-1}}\cdots\L^{s_1,\,t_1})\E_{S^\prime} \\   &=&\E_{T^\p}\L^{S^\p,\,T^\p}\E_{S^\p}  \\
   &=&W^{S^\p}_{T^\p}~.
\end{eqnarray*}
Case 1.3: $i$ is in $S$, but $i+1$ is not. It follows immediately that $i\notin U,\,i+1\in
U$, so $\e_{i+1}$ appears in $\E_{S}$, but $\e_i$ does not. Using (iii): $\e_{i+1}\l_i=\l_i\e_i$ and (v): $\l_i\e_j=\e_j\l_i$ for $j\ne i, i+1$, we have
\begin{eqnarray*}
\E_S\l_i&=&\e_{u_1}\cdots\e_{i+1}\cdots\e_{u_{n-k}}\l_i\\
   &=& \e_{u_1}\cdots\e_{i+1}\l_i\cdots\e_{u_{n-k}} \\
   &=& \e_{u_1}\cdots\l_i\e_i\cdots\e_{u_{n-k}} \\
   &=& \l_i\E_{S^\p}~,
\end{eqnarray*}
where $S^\p=(S\backslash\{i\})\cup\{i+1\}$. Let $s_c=i$. Then $s_{c+1}>i+1$ since $i+1\notin S$. For
$j<c$, all the terms in $\L^{s_j,t_j}$ commute with $\l_i$ since their indices are at most $i-2$, giving
\begin{eqnarray*}
W^S_T\l_i &=& \E_T\L^{S,\,T}\E_S\l_i  \\
   &=&\E_T\L^{s_k,\,t_k}\cdots\L^{i,t_c}
      \cdots\L^{s_1,\,t_1}\l_i\E_{S^\p}\\
   &=&\E_T\L^{s_k,\,t_k}\cdots(\l_{t_c}\cdots\l_{i-2}\l_{i-1})\l_i
      \cdots\L^{s_1,\,t_1}\E_{S^\p}\\
   &=&\E_{T^\p}\L^{S^\p,\,T^\p}\E_{S^\p}\\
   &=&W^{S^\p}_{T^\p}~,
\end{eqnarray*}
where $T'=T$.

Case 1.4: $i+1$ is in $S$, but $i$ is not. We have $i\in U,i+1\notin U$. The item $\e_i$ is in $\E_S$, but $\e_{i+1}$ is not. By $\e_i\l_i=\l_i\e_{i+1}$, $\l_i=\l_i\e_i$, and  $\l_i\e_j=\e_j\l_i$ for $j\ne i, i+1$, we find
\begin{eqnarray*}
\E_S\l_i&=&\e_{u_1}\cdots\e_i\cdots\e_{u_{n-k}}\l_i\\
   &=& \e_{u_1}\cdots\e_i\l_i\cdots\e_{u_{n-k}} \\
   &=& \e_{u_1}\cdots\l_i\e_{i+1}\cdots\e_{u_{n-k}} \\
   &=& \e_{u_1}\cdots\l_i\e_i\e_{i+1}\cdots\e_{u_{n-k}} \\
   &=& \l_i\e_{u_1}\cdots\e_i\e_{i+1}\cdots\e_{u_{n-k}} \\
   &=& \l_i\E_{S^\p}~,
\end{eqnarray*}
where $S^\p=S\backslash\{i+1\}$.

Let $s_{c+1}=i+1$. Then for $j\le c$, the indices of the terms in $\L^{s_j,\,t_j}$ are at most $i-2$, so they commute with
$\l_i$, leading to
\begin{eqnarray*}
W^S_T\l_i &=& \E_T\L^{S,\,T}\E_S\l_i  \\
   &=&\E_T\L^{s_k,\,t_k}\cdots\L^{s_{c+2},\,t_{c+2}}\L^{i+1,\,t_{c+1}}\L^{s_{c},\,t_{c}}\cdots\L^{s_1,\,t_1}\l_i\E_{S^\p}  \\
   &=&\E_T\L^{s_k,\,t_k}\cdots\L^{s_{c+2},\,t_{c+2}}(\l_{t_{c+1}}\cdots \l_{i-1}\l_i)\l_i\L^{s_{c},\,t_{c}} \cdots\L^{s_1,\,t_1}\E_{S^\p}~.
\end{eqnarray*}
Since $\l_i^2=\e_i\e_{i+1}$ and $\l_{j-1}\e_j=\e_{j-1}\e_j$ for all
$t_{c+1}+1\leq j\leq i$ (use them repeatedly below), we obtain
\begin{eqnarray*}
W^S_T\l_i&=&\E_T\L^{s_k,\,t_k}\cdots\L^{s_{c+2},\,t_{c+2}}(\l_{t_{c+1}}\cdots
            \l_{i-1}\l_i^2)\L^{s_{c},\,t_{c}} \cdots\L^{s_1,\,t_1}\E_{S^\p}\\
   &=&\E_T\L^{s_k,\,t_k}\cdots\L^{s_{c+2},\,t_{c+2}}(\l_{t_{c+1}}\cdots
            \l_{i-1}\e_i\e_{i+1})\L^{s_{c},\,t_{c}}\cdots\L^{s_1,\,t_1}\E_{S^\p}\\
   &=&\E_T\L^{s_k,\,t_k}\cdots\L^{s_{c+2},\,t_{c+2}}(\l_{t_{c+1}}\cdots
            \l_{i-2}\e_{i-1}\e_i\e_{i+1})\L^{s_{c},\,t_{c}}\cdots\L^{s_1,\,t_1}\E_{S^\p}\\
   &\vdots&  \\
   &=&\E_T\L^{s_k,\,t_k}\cdots\L^{s_{c+2},\,t_{c+2}}(\e_{t_{c+1}}\cdots
            \e_{i-1}\e_i\e_{i+1})\L^{s_{c},\,t_{c}}\cdots\L^{s_1,\,t_1}\E_{S^\p}~.
\end{eqnarray*}
As the indices of all the terms $\l$ on the left of $\e_{t_{c+1}}$ are at least $t_{c+1}+1$, it follows from (v) that $\e_{t_{c+1}}$ commutes with $\L^{s_k,\,t_k}\cdots\L^{s_{c+2},\,t_{c+2}}.$ We obtain
\begin{eqnarray*}
W^S_T\l_i  &=&\E_T\e_{t_{c+1}}\L^{s_k,\,t_k}\cdots\L^{s_{c+2},\,t_{c+2}}
            (\e_{t_{c+1}+1}\cdots\e_i\e_{i+1})
            \L^{s_{c},\,t_{c}}\cdots\L^{s_1,\,t_1}\E_{S^\p}\\
   &=&\E_{T^\p}\L^{s_k,\,t_k}\cdots\L^{s_{c+2},\,t_{c+2}}
            (\e_{t_{c+1}+1}\cdots\e_i\e_{i+1})
            \L^{s_{c},\,t_{c}}\cdots\L^{s_1,\,t_1}\E_{S^\p}~,
\end{eqnarray*}
where $T^\p=T\backslash\{t_{c+1}\}$.

Our aim now is to switch $\e_j$ for $t_{c+1}+1\leq j\leq i+1$ with the terms $\l$ one by one on the right of $\e_j$ until it encounters either $\l_j$ or $\l_{j-1}$, or until it commutes past all of the $\l$. If $t_{c+1}+1\leq j\leq s_c$, then $\e_j$ will run into some $\l_{j-1}$ in $\L^{s_c,\,t_c}=\l_{t_c}\l_{t_c+1}\cdots\l_{s_c-1}$, leading to $\e_j\l_{j-1} = \l_{j-1}$ by (iii).
If $s_c+1\le j\leq i+1$, then $\e_j$ does not run into any $\l_j$ or $\l_{j-1}$ in $\L^{s_c,\,t_c}$ since the maximal index of the $\l$ there is $s_c-1$, and $\e_j$ does not run into any $\l_j$ or $\l_{j-1}$ to the right of $\L^{s_c,\,t_c}$, either, because all the indices of the $\l$ are at most $s_c-2$. But for $s_{c+1}\le j\leq i+1$, we have $j\in \mathbf{n}-S^\p$, so $\e_j$ will run into $\e_j$ in $\E_{S^\p}$. By $\e_j\l_{j-1}=\l_{j-1}$ and
$\e^2_j=\e_j$, we get
\begin{eqnarray*}
W^S_T\l_i&=&\E_{T^\p}\L^{s_k,\,t_k}\cdots\L^{s_{c+2},\,t_{c+2}}
              \L^{s_c,\,t_c}
            \cdots\L^{s_1,\,t_1}\E_{S^\p}  \\
   &=&\E_{T^\p}\L^{S^\p,\,T^\p}\E_{S^\p}  \\
   &=&W^{S^\p}_{T^\p}~.
\end{eqnarray*}
This completes the proof of part (1).

We next show part (2). If $i\notin S$, then $i\in U$ and $\e_i$ is in $\E_S$. By $\e^2_i=\e_i$, we have
\begin{eqnarray*}
W^S_T\e_i &=&\E_T\L^{S,\,T}\e_{u_1}
             \cdots\e_i\cdots\e_{u_{n-k}}\e_i \\
   &=&\E_T\L^{S,\,T}\e_{u_1}\cdots\e_i\e_i\cdots\e_{u_{n-k}} \\
   &=&\E_T\L^{S,\,T}\e_{u_1}\cdots\e_i\cdots\e_{u_{n-k}}\\
   &=& \E_T\L^{S,\,T}\E_S \\
   &=& W^S_T~.
\end{eqnarray*}
If $i\in S$, let $s_c=i$, and then for $j<c$, all of the
indices of the terms in $\L^{s_j,\,t_j}$ are at most $i-2$, so they
commute with $\e_i$. From $\e_i^2=\e_i$, we obtain
\begin{eqnarray*}
W^S_T\e_i&=&\E_T\L^{s_k,\,t_k}\cdots\L^{i,\,t_c}
            \cdots\L^{s_1,\,t_1}\E_S\e_i  \\
   &=&\E_T\L^{s_k,\,t_k}\cdots\L^{i,\,t_c}
            \cdots\L^{s_1,\,t_1}\E_S\e_i\e_i  \\
   &=&\E_T\L^{s_k,\,t_k}\cdots\L^{i,\,t_c}\e_i
            \cdots\L^{s_1,\,t_1}\E_{S^\p}~.
\end{eqnarray*}
where $S^\p=S\backslash\{i\}$.
 For $t_c+1\leq j\leq i$, using (iv):
$\l_{j-1}\e_j=\e_{j-1}\e_j$ repeatedly, we get
\begin{eqnarray*}
W^S_T\e_i&=&\E_T\L^{s_k,\,t_k}\cdots(\l_{t_c}\l_{t_c+1}\cdots
            \l_{i-1})\e_i\cdots\L^{s_1,\,t_1}\E_{S^\p}\\
   &=&\E_T\L^{s_k,\,t_k}\cdots(\l_{t_c}\l_{t_c+1}\cdots\l_{i-2}
            \e_{i-1}\e_i)\cdots\L^{s_1,\,t_1}\E_{S^\p}\\
   &\vdots&  \\
   &=&\E_T\L^{s_k,\,t_k}\cdots(\e_{t_c}\e_{t_c+1}\cdots
            \e_{i-1}\e_i)\cdots\L^{s_1,\,t_1}\E_{S^\p}~.
\end{eqnarray*}
Now the situation is very similar to Case 1.3. We commute $\e_{t_c}$ past all $\l$ on the left and get
$\E_T\e_{t_c}=\E_{T^\p}$, where $T^\p=T\backslash\{t_c\}$.
For
$t_c+1\leq j\leq s_c-1$, commute $\e_j$ past all $\l$ on the right
until they reach an $\l_{j-1}$, then use $\e_j\l_{j-1}=\l_{j-1}$. For $s_{c}-2\le j\leq i$, commute $\e_j$ past all $\l$ on the right and until it meets an $\e_j$ in $\E_{S^\p}$, then use $\e_j^2=\e_j$. We find
\begin{eqnarray*}
W^S_T\e_i&=&\E_T\L^{s_k,\,t_k}\cdots(\e_{t_c}\cdots
            \e_{i-1}\e_i)\cdots\L^{s_1,\,t_1}\E_{S^\p}\\
         &=&\E_T\e_{t_c}\L^{s_k,\,t_k}\cdots(\e_{t_c+1}\cdots
            \e_{i-1}\e_i)\cdots\L^{s_1,\,t_1}\E_{S^\p}\\
         &=&\E_{T^\p}L^{S^\p,\,T^\p}E_{S^\p}\\
         &=&W^{S^\p}_{T^\p}~.
\end{eqnarray*}

Parts (3) and (4) are similar.
\end{proof}

\begin{remark}
{\rm
One can prove that the standard words can also be chosen as $\E_{S,\,T}\L^{S,\,T}$ where $\E_{S,\,T}=\e_{w_1}\e_{w_2}\dots\e_{w_h}$ if $\mathbf{n}-S-T=\{w_1,w_2,\dots,w_h\}$, and $L^{S,\,T}$ is the same as Proposition \ref{ml}.
}
\end{remark}

Let $G$ be the subset of $B_n$ consisting of the elements $1$, $l_i$, and $e_j$, where
\begin{align*}
  l_i&=\left(
        \begin{array}{ccccccc}
          1 &  \dots & i-1 & i+1 & i+2 & \dots  & n \\
          1 &  \dots & i-1 & i & i+2 & \dots  & n \\
        \end{array}
      \right)\quad \text{ and} \\
  e_j&=\left(
        \begin{array}{cccccc}
          1  & \dots & j-1 &  j+1 & \dots  & n \\
          1  & \dots & j-1 &  j+1 & \dots  & n \\
        \end{array}
      \right)~,
\end{align*}
where $1\le i\leq n-1$ and $1\leq j\leq n$.

Our intention here is to show that $G$ generates $B_n$. For any arbitrary $a, b\in \mathbf{n}$ with $a>b$, we define $L^{a,\,b}=l_b\cdots l_{a-2}l_{a-1}$
and $L^{a,\,a}=1$.
For $f\in B_n$, let
\[
    S=\{s_1<\cdots<s_k\} \quad\text{and}\quad T=\{t_1<\dots<t_k\}
\]
be its respective domain and range. Let $U, V$ be as in (\ref{uv}) and let
\[
    E_S=e_{u_1}\cdots e_{u_{n-k}},\quad L^{S,\,T}=L^{s_k,\,t_k}\cdots L^{s_1,\,t_1},\quad E_T=e_{v_1}\cdots e_{v_{n-k}}~,
\]
where we agree that $E_\mathbf{n}=1$. Then $x=E_TL^{S,\,T}E_S$, and we have shown
\begin{theorem}\label{gene}
Every element of $B_n$ is a product of elements of $G$.
\end{theorem}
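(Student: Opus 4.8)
The plan is to verify that the factorization displayed just before the statement, namely $f=E_TL^{S,\,T}E_S$ for the element $f\in B_n$ with domain $S=\{s_1<\cdots<s_k\}$ and range $T=\{t_1<\cdots<t_k\}$, is literally an identity of partial maps. Recall that an element of $B_n$ is determined by its domain and range, and that the pairs $(S,T)$ that occur are exactly those with $|S|=|T|$ and $t_j\le s_j$ for all $j$ (order‑preserving together with order‑decreasing). Since $E_S$, $L^{S,\,T}$ and $E_T$ are, by their very definitions, words in $1$, the $l_i$ and the $e_j$, establishing $f=E_TL^{S,\,T}E_S$ for every such $f$ proves that $G$ generates $B_n$. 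The degenerate cases are harmless: $f=1$ corresponds to $S=T=\mathbf{n}$, all factors being $1$, and the $0$ map to $S=T=\emptyset$, where the formula reads $e_1\cdots e_n$.

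First I would record two elementary ingredients. The idempotent $e_u$ is the identity partial map on $\mathbf{n}\setminus\{u\}$, so $E_S=e_{u_1}\cdots e_{u_{n-k}}$ is the identity on $\mathbf{n}\setminus U=S$, and likewise $E_T$ is the identity on $T$. Second, $l_i$ sends $i+1\mapsto i$ and fixes every other element of its domain $\mathbf{n}\setminus\{i\}$; a short induction on the number of factors then shows that $L^{a,\,b}=l_b l_{b+1}\cdots l_{a-1}$ (with $L^{a,\,a}=1$) has domain $\mathbf{n}\setminus\{b,b+1,\dots,a-1\}$, sends $a\mapsto b$, and fixes every other point of its domain; in particular it fixes all points that are $<b$ or $>a$.

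The core step is to compute the composite $L^{S,\,T}=L^{s_k,\,t_k}\cdots L^{s_1,\,t_1}$ on the set $S$, reading the product from right to left. One checks by induction on $j$ that after the factor $L^{s_j,\,t_j}$ has acted, the points $s_1,\dots,s_j$ have been carried to $t_1,\dots,t_j$ while $s_{j+1},\dots,s_k$ are still untouched: the factor $L^{s_j,\,t_j}$ fixes the already‑produced values $t_1<\cdots<t_{j-1}$ because they are all $<t_j$, it fixes $s_{j+1},\dots,s_k$ because they all exceed $s_j$, and it sends $s_j\mapsto t_j$ since $s_j\notin\{t_j,\dots,s_j-1\}$; here the hypotheses $t_1<\cdots<t_k$, $s_1<\cdots<s_k$ and $t_j\le s_j$ are exactly what keep the bookkeeping consistent. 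Hence $S\subseteq D(L^{S,\,T})$ and $L^{S,\,T}(s_j)=t_j$ for all $j$. Composing on the right with $E_S$ cuts the domain down to $S$, so $L^{S,\,T}E_S$ is the partial map with domain $S$ sending $s_j\mapsto t_j$, i.e.\ $L^{S,\,T}E_S=f$; composing on the left with $E_T$ then changes nothing, since $R(f)=T$ and $E_T$ is the identity on $T$, giving $E_TL^{S,\,T}E_S=f$. As $f$ was an arbitrary element of $B_n$, every element of $B_n$ is a product of elements of $G$.

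I expect the only genuine work to be the inductive bookkeeping in the core step — tracking which points have already been moved, which remain fixed, and why no factor $L^{s_j,\,t_j}$ disturbs a previously placed value — everything else being immediate from the definitions. An alternative would be to route the argument through the monoid homomorphism $\hat B_n\to B_n$ sending $\l_i\mapsto l_i$, $\e_i\mapsto e_i$, $\hat 1\mapsto 1$, use Proposition \ref{ml} to reduce an arbitrary element of $\hat B_n$ to a standard word $W^S_T$, and then apply the same computation to identify its image with $f$; but this requires first checking that relations (i)--(vii) hold in $B_n$, so the direct verification above is the shorter path.
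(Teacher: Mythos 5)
Your proof is correct and follows essentially the same route as the paper, which simply asserts the factorization $f=E_TL^{S,\,T}E_S$ without further justification; you supply the (needed) verification that this is an identity of partial maps. The inductive bookkeeping for $L^{S,\,T}$ on $S$ and the observation that $E_S$ trims the domain while $E_T$ acts as the identity on $T$ are exactly the details the paper leaves implicit.
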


The elements of $G$ satisfy the following relations (we omit the details, which are straightforward), where the indices $i$ and $j$ are such that all expressions in the relations are meaningful.

We use $R$ to denote the set of these relations:
\vspace{-2mm}
\begin{enumerate}[{\rm(1)}]
  \item $e_i^2=e_i$~.
\vspace{-2mm}
  \item $l_il_{i+1}l_i=l_il_{i+1}=l_{i+1}l_il_{i+1}$~.
\vspace{-2mm}
  \item $l_ie_i=l_i=e_{i+1}l_i$~.
\vspace{-2mm}
  \item $l_ie_{i+1}=e_ie_{i+1}=e_il_i=l_i^3=l_i^2$~.
\vspace{-2mm}
  \item $e_il_j=l_je_i$ \quad for $i\neq j,j+1$~.
\vspace{-2mm}
  \item $l_il_j=l_jl_i$ \quad for $|i-j|\geq 2$~.
\vspace{-2mm}
  \item $e_ie_j=e_je_i$ \quad for all $i,j$~.
\end{enumerate}
\begin{theorem}\label{main}
The monoid $B_n$ has presentation $\langle\, G \mid R\,\rangle $.
\end{theorem}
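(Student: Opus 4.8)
The goal is to show that the abstract monoid $\hat B_n = \langle\, G \mid R\,\rangle$ defined by generators and relations coincides with $B_n$. Theorem \ref{gene} already shows that $G$ generates $B_n$, and the relations $R$ are easily checked to hold in $B_n$ (this is the parenthetical remark preceding the statement). Hence there is a surjective monoid homomorphism $\pi\colon \hat B_n \twoheadrightarrow B_n$ sending each abstract generator $\l_i,\e_i,\hat 1$ to the corresponding element $l_i,e_j,1$ of $G$. The entire content of the theorem is that $\pi$ is injective. Since $B_n$ is finite (indeed $|B_n| = c_{n+1}$ by Proposition \ref{bnCatalan}), it suffices to exhibit a set of representatives for the elements of $\hat B_n$ — one per element of $B_n$ — so that $|\hat B_n| \le |B_n|$; combined with surjectivity this forces $\pi$ to be a bijection.

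The natural set of representatives is the set of standard words $W^S_T = \E_T\L^{S,T}\E_S$, one for each pair of subsets $S,T\subseteq\n$ with $|S|=|T|=k$ and $s_j\ge t_j$ for all $j$ — that is, one standard word for each element of $B_n$ (the pair $(S,T)$ encoding domain, range, and the unique order-preserving bijection between them). First I would observe that the generators $\hat 1,\l_i,\e_i$ are themselves standard words (noted in the excerpt). Then the key step is: \emph{every element of $\hat B_n$ is equal, modulo the relations $R$, to a standard word}. This follows by induction on word length from Proposition \ref{ml}, which shows precisely that right-multiplying a standard word by a generator $\l_i$ or $\e_i$ again yields a standard word (and similarly on the left). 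Starting from $\hat 1$ and appending generators one at a time, Proposition \ref{ml} rewrites the growing product into standard form at each step; hence the set of standard words is closed under multiplication by generators, so it exhausts $\hat B_n$. This gives $|\hat B_n| \le \#\{(S,T)\} = |B_n|$.

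With the counting inequality in hand, the argument closes quickly. The composite: (standard words) $\hookrightarrow \hat B_n \xrightarrow{\;\pi\;} B_n$ sends $W^S_T$ to the element of $B_n$ with domain $S$ and range $T$ — one checks directly that $\pi(\E_T\L^{S,T}\E_S)$ has domain $S$ and range $T$, by tracking how $E_S$ restricts the domain, $L^{S,T}$ shifts it order-preservingly onto $T$, and $E_T$ leaves it unchanged. This correspondence $(S,T)\mapsto$ (element of $B_n$) is a bijection onto $B_n$, so $\pi$ restricted to the standard words is injective with image all of $B_n$. Since the standard words cover $\hat B_n$, the map $\pi$ itself is injective, hence an isomorphism, and $B_n \cong \langle\, G \mid R\,\rangle$.

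**Main obstacle.** The substantive work is entirely in establishing that standard words are closed under multiplication by generators — that is, Proposition \ref{ml} — which the excerpt has already carried out through a lengthy case analysis using relations (i)–(vii). Given that proposition, the remaining steps are formal. The one point requiring a little care is verifying that distinct standard words $W^S_T$ (equivalently, distinct admissible pairs $(S,T)$) really do map to distinct elements of $B_n$ under $\pi$, i.e. that the standard word associated to $(S,T)$ genuinely represents the partial bijection $S\to T$; this is a direct bookkeeping computation with the matrix/partial-map descriptions of $e_j$ and $l_i$, and presents no real difficulty.
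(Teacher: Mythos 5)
Your proposal is correct and follows exactly the paper's argument: the paper's (very terse) proof likewise constructs the surjection $\phi\colon\hat B_n\to B_n$ from Theorem \ref{gene} and the verification of the relations, and then invokes Proposition \ref{ml} to reduce every word to a standard word $W^S_T$, forcing $|\hat B_n|\le|B_n|$ and hence injectivity. Your write-up simply makes explicit the counting and the identification of $\pi(W^S_T)$ with the partial map $S\to T$, which the paper leaves implicit.
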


\begin{proof}
The mapping $\phi:\hat{B}_n\rightarrow B_n$ defined by $\l_i\mapsto l_i$ and $\e_i\mapsto e_i$ and $\hat 1\mapsto 1$ induces a monoid homomorphism of $\hat{B}_n$ onto $B_n$. It follows from Proposition \ref{ml} that the mapping is injective.
\end{proof}

\end{document}